\numberwithin{equation}{section}
\newcommand{\p}{\ensuremath{\mathbb{P}}}
\newcommand{\s}{\ensuremath{\mathscr{S}}}
\newcommand{\Mod}{\ensuremath{\mathrm{Mod}}}
\newtheorem{theorem}{Theorem}[section]
\newtheorem{lemma}[theorem]{Lemma}
\newtheorem{proposition}[theorem]{Proposition}
\newtheorem{definition}[theorem]{Definition}
\newtheorem{corollary}[theorem]{Corollary}
\newtheorem{remark}[theorem]{Remark}
\newtheorem*{thma}{Theorem 1}
\newtheorem*{thmb}{Theorem 2}
\theoremstyle{definition}
\title{Combinatorial rigidity of arc complexes}
\author{Valentina Disarlo} 
\date{}
\begin{document}

\maketitle

\begin{abstract}
We study the arc complex of a surface with marked points in the interior and on the boundary. We prove that the isomorphism type of the arc complex determines the topology of the underlying surface, and that in all but a few cases every automorphism is induced by a homeomorphism of the surface. As an application we deduce some rigidity results for the Fomin-Shapiro-Thurston cluster algebra associated to such a surface. Our proofs do not employ any known simplicial rigidity result. 
\end{abstract}

% AMS keywords (used in AMS journals)
   
\section{Introduction}
%\subsection{The arc complex}
The \emph{arc complex} of a surface with marked points is a simplicial complex whose vertices are the homotopy classes of essential arcs based on the marked points, and whose simplices correspond to families of pairwise disjoint arcs on the surface. Its dual is the \emph{flip graph}, a graph that encodes the combinatorics of ideal triangulations. Both objects were introduced by Harer \cite{Har2} as a tool to study some homological properties of the mapping class group. Their combinatorial and topological properties are crucial in Penner's decorated Teichm\"uller theory \cite{Pen3}, in the construction of various compactifications of the moduli space \cite{BoeP}, and in Fomin-Shapiro-Thurston's theory of cluster algebras associated to bordered surfaces \cite{DTh, DTh2}. Some geometric properties of these graphs and their relation with the geometry of the mapping class group were recently investigated by Hensel-Przytycki-Webb \cite{Hensel} and Disarlo-Parlier \cite{Disarlo-Parlier}.   \\ 

In this paper we deal with the arc complex of an orientable surface with non-empty boundary, with at least one marked point on each boundary component and a finite number of punctures in its interior. These surfaces are often called \emph{ciliated} in literature (see for instance Fock-Goncharov \cite{Fock1}), and we will refer to them as pairs $(S,\p)$, $S$ being a punctured surface with boundary and $\p$ a set of marked points on the boundary of $S$. In this paper, we will be primarily interested in the combinatorial rigidity of their arc complexes. The problems we will deal with are motivated by the theory of cluster algebras. Similar rigidity problems were studied by many authors in the past and yield various descriptions of the mapping class group as the automorphism group of some simplicial complex naturally associated to a surface (for a survey of these results see \cite{MCP}). Most of the proofs rely on non-trivial reductions to the rigidity theorem for curve complexes. To our knowledge all these results refer to the mapping class of closed or punctured surfaces, and we are not aware of similar results for the mapping class group of ciliated surfaces. Our proofs do not employ any previously known rigidity result. \\ 

The first question we approach is the following: can two non-homeomorphic ciliated surfaces have isomorphic arc complexes? The answer is ``no". 
\begin{theorem}
Let $(S,\p)$ and $(S',\p')$ be two ciliated surfaces. If their arc complexes $A(S,\p)$ and $A(S',\p')$ are isomorphic, then $(S,\p)$ and $(S',\p')$ are homeomorphic.  
\end{theorem}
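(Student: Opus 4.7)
My plan is to recover each topological invariant of $(S,\p)$ — the genus $g$, the number $b$ of boundary components, the number $n$ of interior punctures, and the multiset recording how many points of $\p$ lie on each boundary component — directly from the abstract simplicial complex $A(S,\p)$, without appealing to any rigidity result for curve complexes or flip graphs.

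The first invariant to extract is the size of a maximal simplex. Counting vertices, edges, and triangles of an ideal triangulation and applying Euler's formula shows that any such triangulation contains exactly $6g + 3b + 3n + |\p| - 6$ arcs, so $\dim A(S,\p) + 1$ is equal to this combination of invariants. To separate them I would exploit vertex links: cutting along an arc $\alpha$ produces a (possibly disconnected) ciliated surface $S_\alpha$ whose arc complex is canonically isomorphic to the link of $\alpha$ in $A(S,\p)$, with a join structure in the case of separating arcs. This sets up an induction on the arc count, provided one can identify, purely combinatorially, a vertex whose link corresponds to a controlled topological reduction — say, cutting off a peripheral triangle or resolving a monogon bounded by a loop around a puncture.

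To distinguish the arc types — boundary-to-boundary, boundary-to-puncture, and puncture-to-puncture (including loops at a single puncture) — I would read the combinatorics of the link. A loop enclosing a single puncture is characterized by admitting a unique ``companion'' arc that belongs to every triangulation containing it, which forces the link to be a cone over a smaller arc complex (the hallmark of the self-folded triangle). Arcs cutting off elementary regions such as small polygons are detected by the link being a join of strictly smaller arc complexes whose dimensions match the expected decomposition. Counting vertices of each type against the reduction formulas recovers $n$, then $b$, and finally the partition of $|\p|$ among the boundary components; combined with the arc-count formula this also pins down $g$, finishing the reconstruction of the homeomorphism type.

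The principal obstacle will be the base cases and the low-complexity exceptional surfaces — polygons, once-punctured polygons, small annuli, and small-genus surfaces whose arc complexes are spheres or otherwise too degenerate for the inductive link argument to bite. These I expect to treat by a direct case analysis, comparing $f$-vectors and verifying that the finitely many exceptional complexes are pairwise non-isomorphic. A secondary but delicate point is to ensure that, whenever a given topological feature is present in $(S,\p)$, the complex really does contain a vertex of the type needed to peel it off, so that the induction never gets stuck on a genuine ciliated surface.
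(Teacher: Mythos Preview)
The excerpt supplied stops at the end of Section~2, so the paper's actual proof of Theorem~1.1 (announced for Section~\ref{ch1-section-1.2.3}) is not available for line-by-line comparison. From the paper's own description, however, the argument proceeds through a collection of ``invariance lemmas''---statements that certain combinatorially characterized classes of arcs are preserved by any simplicial isomorphism---and then uses these to recover the topological invariants of the surface. Your plan is precisely of this shape: you propose to distinguish arc types (loops enclosing a single puncture, arcs cutting off small polygons, separating versus nonseparating arcs) by the structure of their links, and then to peel off topological features one at a time until $g$, $b$, $s$, and the partition of $|\mathscr P|$ among the boundary components are all determined. So, as far as one can tell from what is given, your approach is essentially the paper's.

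Two points in your outline deserve tightening before it becomes a proof. First, the phrase ``counting vertices of each type'' is loose: by Lemma~\ref{dimension} the complex has infinitely many vertices except for polygons with at most one puncture, so no literal count is available. What you really mean is an inductive reduction---detect the existence of a vertex of a prescribed type, pass to its link, and repeat---and you should phrase it that way. Second, your link characterizations have to be proved as genuine equivalences. For instance, you assert that a loop around a single puncture is recognized by its link being a cone; you must then rule out that some other arc (say a nonseparating arc, or a separating arc whose complementary piece merely happens to have a cone arc complex) could exhibit the same link behaviour. These verifications are exactly the content one expects the paper's ``invariance lemmas'' to supply, and they are where the real work lies. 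With those two points made precise, and with the low-complexity base cases from Lemma~\ref{remark1} handled directly as you suggest, the argument should go through.
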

A similar result also holds for the curve complex and the proof relies on the fact that isomorphic complexes have the same dimension. Here this fact alone is not sufficient because there exists many pairs of non-homeomorphic surfaces whose arc complexes have the same dimension, and more machinery is required in our proof. \\ 

The second question we approach is the classical Ivanov's problem for the arc complex of a ciliated surface: is every automorphism of such a complex induced by a homeomorphism of its underlying surface? The answer is ``yes". In fact, in all but a finitely many cases automorphism group of the arc complex is isomorphic to the mapping class group of the surface, except a few cases. We also list and study these exceptional cases. 
\begin{theorem}
Every automorphism of the arc complex $A(S,\p)$ is induced by a mapping class of $(S,\p)$. Moreover, if the dimension of $A(S,\p)$ is greater than 1, the automorphism group of $A(S,\p)$ is isomorphic to the mapping class group of $(S,\p)$. 
\end{theorem}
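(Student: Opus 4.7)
The plan is to construct, for each automorphism $\varphi$ of $A(S,\p)$, a homeomorphism of $(S,\p)$ that induces it, using ideal triangulations as the bridge between combinatorics and topology.

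First I would develop a combinatorial dictionary that detects topological features of an arc purely from its behavior in $A(S,\p)$: which arcs are boundary-parallel, which cut off a once-punctured disk, which join two marked points on the same boundary component versus two distinct ones, and which are separating versus non-separating. The natural invariants are the dimension of the link of a vertex, the combinatorial structure of its star, and connectivity properties of the induced subcomplex on the link (for instance, the link of a separating arc decomposes as a join, while that of a non-separating arc typically does not). I would then show that $\varphi$ preserves each of these arc types, arguing type by type.

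Next I fix an ideal triangulation $T$ of $(S,\p)$, viewed as a top-dimensional simplex of $A(S,\p)$, and consider $T' = \varphi(T)$, which is again a top-dimensional simplex and hence corresponds to an ideal triangulation. Using the invariants of the previous step, $\varphi$ restricts to a combinatorial isomorphism of the two triangulated surfaces, matching triangles to triangles and boundary incidences consistently. By the first theorem stated in the excerpt (isomorphism of arc complexes forces homeomorphism of surfaces), this data specifies up to isotopy a homeomorphism $h_T:(S,\p)\to(S,\p)$ sending $T$ to $T'$.

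The third step is to verify that $h_T$ actually induces $\varphi$ on every vertex of $A(S,\p)$, not just those of $T$. By connectivity of the flip graph it suffices to check this locally: whenever two triangulations $T_1,T_2$ differ by a flip along an arc $\alpha$, the quadrilateral surrounding $\alpha$ must be mapped by $h_{T_1}$ to the quadrilateral surrounding $\varphi(\alpha)$, so that $h_{T_1}$ and $h_{T_2}$ agree as mapping classes. This also shows $h_T$ is independent of $T$, producing a well-defined inverse to the natural map $\mathrm{MCG}(S,\p)\to\mathrm{Aut}(A(S,\p))$.

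Finally, for the isomorphism statement when $\dim A(S,\p)>1$, I would show that the natural map has trivial kernel. A mapping class fixing every arc setwise fixes every ideal triangulation, and an Alexander-method argument then forces it to be the identity once the surface carries enough arcs, a bound that coincides precisely with $\dim A(S,\p)>1$. The finitely many low-dimensional exceptions (small polygons and annuli with few marked points) would then be enumerated and handled by direct inspection. I expect the main obstacle to be the first step: giving link-theoretic characterizations of arc types that are uniform across all ciliated surfaces, in particular detecting interior punctures purely combinatorially, and carefully analyzing the exceptional cases where accidental symmetries of $A(S,\p)$ might appear.
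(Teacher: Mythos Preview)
Your overall strategy---invariance lemmas for arc types, then build a homeomorphism from a fixed triangulation, propagate along flips, and handle injectivity via an Alexander-method argument---is the same architecture the paper uses. The paper develops its invariance lemmas first (these are exactly the ``combinatorial dictionary'' you describe, characterizing arc types via links and joins) and then runs the triangulation-and-flip argument; the injectivity statement for $\dim A(S,\p)\geq 2$ is precisely Proposition~\ref{one-to-one}, proved via Alexander's lemma as you suggest.

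There is one genuine imprecision worth correcting. In your second step you invoke Theorem~1.1 to produce the homeomorphism $h_T$ sending $T$ to $T'$. But Theorem~1.1 only says that isomorphic arc complexes force homeomorphic surfaces; here the two surfaces are literally the same $(S,\p)$, so that statement is vacuous, and in any case it does not hand you a \emph{specific} homeomorphism realizing $\varphi|_T$. What you actually need is that $\varphi$ carries the triangles of $T$ to the triangles of $T'$: that is, whenever three arcs of $T$ bound an ideal triangle, their $\varphi$-images do too, with matching boundary/puncture incidences. Once that is established, $h_T$ is built triangle by triangle and glued along shared edges---an elementary construction, not a consequence of Theorem~1.1. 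Proving that triangles go to triangles is the real content hiding in your first step, and it is where most of the work in the paper's invariance lemmas goes; your sketch should make this explicit rather than defer it to Theorem~1.1.
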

The mapping class group we consider here is the group of homeomorphisms of the surface that preserve setwise the marked points and the punctures, modulo homotopies fixing 
the marked points and the punctures pointwise. Mapping classes are allowed to permute the marked points, the punctures and to reverse the orientation of the surface. The Dehn 
twists around the boundary components of the surfaces are here nontrivial elements.In \cite{IMC} Irmak-McCarthy proved an analogous theorem for the arc complex of a punctured 
surface without boundary, but their results do not imply ours. 
We remark that theorem 1.1 and theorem 1.2 together imply that every isomorphism between two arc complexes arises from a homeomorphism between the underlying surfaces. This 
is related to a question of Aramayona-Souto \cite{Javi} concerning the superrigidity of some simplicial complexes associated to a surface. By a construction of Korkmaz-
Papadopoulos \cite{KP2},  Theorem 1.2 implies immediately that every automorphism of the flip graph $F(S,\p)$ is induced by a mapping class of $(S,\p)$. In \cite{Hugo-3} 
Aramayona-Koberda-Parlier also employ our Theorems 1.1 and 1.2 above to characterize all injective simplicial maps between different flip graphs. Among other things, they 
prove that these maps only arise from topological embeddings between the underlying two surfaces. \\ 

As above mentioned, our results have further applications in the context of the Fomin-Shapiro-Thurston cluster algebra associated to a ciliated surface. Indeed, the cluster complex of this cluster algebra is a slightly modified arc complex which has a natural projection to our arc complex. Similarly, the exchange graph of this cluster algebra is a slightly modified flip graph with a natural projection  to our flip graph. When the surface has no punctures, both projections are isomorphisms (see  \cite{DTh}).
In the light of this, Theorem 1.1 proves that the combinatorics of the cluster complex alone suffices to determine the topology of the underlying surface. We have: 
\begin{corollary}
Let $(S,\p)$ and $(S',\p')$ be two ciliated surfaces without punctures, and let  $\mathscr A(S,\p)$ and $\mathscr A(S',\p')$ be their cluster algebras. If the cluster complexes of $\mathscr A(S,\p)$ and $\mathscr A(S',\p')$ are isomorphic, then $(S,\p)$ and $(S',\p')$ are homeomorphic.
\end{corollary}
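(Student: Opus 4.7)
\medskip
\noindent
\textbf{Proof proposal.} The plan is to reduce the statement directly to Theorem 1.1 by interposing the natural projection between the cluster complex and the arc complex. The key input is the result of Fomin-Shapiro-Thurston (cited in the introduction as \cite{DTh}) which says that, for a ciliated surface $(S,\p)$ \emph{without punctures}, every tagged arc is just an ordinary arc, so the natural simplicial projection
\[
\pi_{(S,\p)}\colon \text{(cluster complex of $\mathscr A(S,\p)$)} \longrightarrow A(S,\p)
\]
is in fact a simplicial isomorphism. I would first recall this construction: vertices of the cluster complex correspond to cluster variables, which in the unpunctured case are in bijection with homotopy classes of essential arcs, and maximal simplices correspond to clusters, which are in bijection with ideal triangulations; the bijection is obviously compatible with the face relation.

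Having that, the proof of the corollary is essentially a one-line chain of isomorphisms. Assume there is an isomorphism $\Phi$ between the cluster complex of $\mathscr A(S,\p)$ and the cluster complex of $\mathscr A(S',\p')$. Then, by the above, the composition
\[
A(S,\p) \;\xleftarrow{\;\pi_{(S,\p)}\;}\; \text{cluster complex of }\mathscr A(S,\p) \;\xrightarrow{\;\Phi\;}\; \text{cluster complex of }\mathscr A(S',\p') \;\xrightarrow{\;\pi_{(S',\p')}\;}\; A(S',\p')
\]
is a simplicial isomorphism between $A(S,\p)$ and $A(S',\p')$. Applying Theorem 1.1, we conclude that $(S,\p)$ and $(S',\p')$ are homeomorphic.

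I do not expect any serious obstacle here: the argument is a straightforward reduction, and the only thing that actually needs to be checked is the bijection between cluster variables (resp.\ clusters) and arcs (resp.\ ideal triangulations) in the unpunctured case, which is exactly where the hypothesis ``without punctures'' is used, since in the presence of punctures one has to distinguish tagged arcs and the projection ceases to be an isomorphism. The real content of the corollary is Theorem 1.1 itself; the role of this statement is just to translate the combinatorial rigidity of arc complexes into a rigidity statement for the Fomin-Shapiro-Thurston cluster complex.
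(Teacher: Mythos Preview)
Your proof is correct and essentially identical to the paper's own argument: the paper invokes the Fomin--Shapiro--Thurston theorem (Theorem~\ref{Dylan}) to identify the cluster complex with the arc complex in the unpunctured case, and then applies Theorem~1.1. Your write-up spells out the projection maps and the composition explicitly, but the content is the same two-step reduction.
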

As an immediate corollary we have a rigidity result for these cluster algebras:
\begin{corollary}
Let $(S,\p)$ and $(S',\p')$ be two ciliated surfaces without punctures. If their cluster algebras $\mathscr A(S,\p)$ and $\mathscr A(S',\p')$  are strongly isomorphic, then $(S,\p)$ and $(S',\p')$ are homeomorphic.
\end{corollary}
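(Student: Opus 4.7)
The plan is to reduce Corollary 1.4 to Corollary 1.3 by showing that a strong isomorphism of cluster algebras induces an isomorphism of their cluster complexes. Recall that a strong isomorphism $\Phi\colon \mathscr A(S,\p)\to\mathscr A(S',\p')$ is, by definition, an algebra isomorphism that sends every cluster of $\mathscr A(S,\p)$ bijectively onto a cluster of $\mathscr A(S',\p')$. In particular, $\Phi$ maps cluster variables to cluster variables and preserves the combinatorics of compatibility: two cluster variables lie in a common cluster of $\mathscr A(S,\p)$ if and only if their images lie in a common cluster of $\mathscr A(S',\p')$.

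First I would make the passage from algebra to combinatorics explicit. The cluster complex $\Delta(\mathscr A)$ of a cluster algebra $\mathscr A$ is the simplicial complex whose vertices are the cluster variables and whose maximal simplices are the clusters. From the defining property of a strong isomorphism just recalled, $\Phi$ descends to a simplicial isomorphism $\Delta(\Phi)\colon \Delta(\mathscr A(S,\p))\to\Delta(\mathscr A(S',\p'))$. This is essentially a formal consequence of the definitions and needs no input from the geometry of the surface.

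Next, since both $(S,\p)$ and $(S',\p')$ are ciliated surfaces \emph{without punctures}, the projection from the cluster complex to the arc complex is an isomorphism (this is the case in which the Fomin--Shapiro--Thurston correspondence produces no tagged arcs and no boundary orbits of mutations collapse). Therefore the simplicial isomorphism $\Delta(\Phi)$ is equivalent to an isomorphism of arc complexes $A(S,\p)\cong A(S',\p')$. At this point Corollary 1.3 applies directly and gives a homeomorphism $(S,\p)\cong(S',\p')$, concluding the proof.

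The only step that requires any care is the first one, verifying that a strong isomorphism really does yield a simplicial isomorphism of cluster complexes; after that the argument is purely a chain of known identifications. I do not expect a genuine obstacle here, since strong isomorphism is precisely the notion tailored to preserve the cluster structure, and the punctureless hypothesis is exactly what is needed so that the cluster complex agrees with the arc complex to which the previous corollary applies.
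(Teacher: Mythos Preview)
Your argument is correct and follows essentially the same route as the paper: a strong isomorphism of cluster algebras yields an isomorphism of cluster complexes, and then Corollary~1.3 (which already subsumes the identification of the cluster complex with the arc complex in the punctureless case) finishes the job. The only cosmetic difference is that the paper justifies the first step by citing Theorems~7.11 and~9.21 of Fomin--Shapiro--Thurston \cite{DTh}, whereas you argue it directly from the definition of strong isomorphism; both are acceptable here.
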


Theorem 1.2 also implies the following corollaries that further prove the ``naturality" of Fomin-Shapiro-Thurston's construction. 
\begin{corollary} 
If $(S,\p)$ is a ciliated surface without  punctures then the automorphism group of the cluster complex of $\mathscr A(S,\p)$ is isomorphic to the mapping class group of $(S,\p)$. 
\end{corollary}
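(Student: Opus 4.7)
The plan is to reduce Corollary 1.5 directly to Theorem 1.2 via the projection from the cluster complex of $\mathscr A(S,\p)$ onto the arc complex $A(S,\p)$. As recalled in the introduction, for ciliated surfaces without punctures Fomin-Shapiro-Thurston proved in \cite{DTh} that this projection is a simplicial isomorphism: arcs are in bijection with cluster variables, and ideal triangulations are in bijection with clusters, flips corresponding to mutations. Because no punctures are present, no ``tagging'' of arcs intervenes and the identification is strict.

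Granting this, any simplicial automorphism of the cluster complex transports canonically through the isomorphism to a simplicial automorphism of $A(S,\p)$, and vice versa; hence there is a natural isomorphism of groups
\[
  \operatorname{Aut}\bigl(\text{cluster complex of } \mathscr A(S,\p)\bigr) \;\cong\; \operatorname{Aut}\bigl(A(S,\p)\bigr).
\]
When $\dim A(S,\p)>1$, Theorem 1.2 identifies the right-hand side with the mapping class group $\operatorname{MCG}(S,\p)$. Composing the two isomorphisms gives the desired result, and this is essentially all there is to the argument.

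The only technical nuisance — not really an obstacle — is to dispose of the low-dimensional exceptional configurations. A short Euler-characteristic count shows that for a ciliated surface without punctures the dimension of $A(S,\p)$ equals the number of diagonals of a triangulation, so $\dim A(S,\p)\le 1$ only for a short explicit list of small surfaces (disks with at most five boundary marked points, and annuli with very few marked points). In each such case both $\operatorname{MCG}(S,\p)$ and $\operatorname{Aut}(A(S,\p))$ can be computed by hand and one checks directly that the natural map between them is an isomorphism. This case analysis, combined with the general argument above, completes the proof.
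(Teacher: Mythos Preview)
Your reduction via the Fomin--Shapiro--Thurston identification of the cluster complex with $A(S,\p)$, followed by an appeal to Theorem~1.2, is exactly the paper's argument: the paper simply records the corollary as a direct consequence of these two results and says nothing further.

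Your added paragraph on the low-dimensional cases, however, contains a genuine error. For the $4$-gon one has $\mathrm{Mod}(S,\p)\cong D_4$ (order~$8$) while $A(S,\p)$ consists of two isolated vertices, so $\mathrm{Aut}\,A(S,\p)\cong\mathbb Z_2$; the natural map $\rho$ has a kernel of order~$4$. For the annulus with $\p=(1,1)$, the involution $(z,t)\mapsto(\bar z,1-t)$ is orientation-preserving, swaps the two boundary circles, and fixes every arc, so it lies in the kernel of $\rho$; here $\mathrm{Mod}(S,\p)\cong D_\infty\times\mathbb Z_2$ while $\mathrm{Aut}\,A(S,\p)\cong D_\infty$. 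Thus one cannot ``check directly that the natural map between them is an isomorphism'' in these two cases. The corollary, like the second clause of Theorem~1.2, should be read under the hypothesis $\dim A(S,\p)\geq 2$; the paper's own proof does not treat the exceptional surfaces either.
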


\begin{corollary} 
If $(S,\p)$ is a ciliated surface without  punctures then the automorphism group of the exchange graph of $\mathscr A(S,\p)$ is isomorphic to the mapping class group of $(S,\p)$. 
\end{corollary}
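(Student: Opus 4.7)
The plan is to reduce to the flip graph and then invoke Theorem 1.2. Since $(S,\p)$ has no punctures, the Fomin--Shapiro--Thurston dictionary (\cite{DTh}) identifies the exchange graph of $\mathscr{A}(S,\p)$ canonically with the flip graph $F(S,\p)$: clusters correspond to ideal triangulations, and cluster mutations correspond to flips of (necessarily flippable) arcs. It therefore suffices to show $\mathrm{Aut}(F(S,\p)) \cong \mathrm{MCG}(S,\p)$.

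First I would use the Korkmaz--Papadopoulos construction \cite{KP2} already invoked in the introduction to produce, from every $\phi \in \mathrm{Aut}(F(S,\p))$, a well-defined simplicial automorphism of $A(S,\p)$. The recipe is purely combinatorial: each arc $a$ corresponds to the set of edges of $F(S,\p)$ along which $a$ is flipped (equivalently, the set of pairs of triangulations whose symmetric difference is $\{a,a'\}$ for some $a'$), and disjointness of two arcs is detected by the existence of a common triangulation -- a vertex of $F(S,\p)$ incident to both edge sets. Both structures are preserved by $\phi$, so $\phi$ descends to an automorphism of $A(S,\p)$. Theorem 1.2 then guarantees that this automorphism is induced by a mapping class, so the natural map $\mathrm{MCG}(S,\p) \to \mathrm{Aut}(F(S,\p))$ is surjective.

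For injectivity, any mapping class acting trivially on $F(S,\p)$ fixes every ideal triangulation and hence every arc of $(S,\p)$, so it acts trivially on $A(S,\p)$; the second half of Theorem 1.2 then forces it to be trivial in $\mathrm{MCG}(S,\p)$ whenever $\dim A(S,\p)>1$. Combining this with the previous paragraph yields the desired isomorphism in the generic range.

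The main obstacle is the finite list of low-complexity exceptions to Theorem 1.2 -- punctureless ciliated surfaces with $\dim A(S,\p)\le 1$, essentially polygons with very few marked points. For these one cannot quote Theorem 1.2 directly, and I would instead compute both $\mathrm{Aut}(F(S,\p))$ and $\mathrm{MCG}(S,\p)$ by hand: in each case $F(S,\p)$ is a small finite graph (a point, an edge, a pentagon, etc.) whose automorphism group is easily identified with the corresponding dihedral/symmetric action of $\mathrm{MCG}(S,\p)$. Verifying this case-by-case match is what ensures that the corollary holds without any dimension hypothesis.
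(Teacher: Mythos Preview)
Your proof follows exactly the paper's route: identify the exchange graph with the flip graph via Fomin--Shapiro--Thurston, pass from $\mathrm{Aut}\,F(S,\p)$ to $\mathrm{Aut}\,A(S,\p)$ via Korkmaz--Papadopoulos, and conclude with Theorem~1.2; the paper records this as a one-line citation of those same three results. One small correction to your low-complexity discussion: the annulus $(S_{0,2}^0,(1,1))$ has an infinite (not finite) flip graph, and there the boundary-swapping involution is a nontrivial mapping class acting trivially on all arcs, so the isomorphism genuinely needs the hypothesis $\dim A(S,\p)>1$ from Theorem~1.2 rather than holding unconditionally.
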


The structure of this paper is the following. In Section \ref{ch1-section2} we introduce the notation, we list exceptional cases, we recall the Fomin-Shapiro-Thurston construction and we prove the corollaries above. In Section \ref{ch1-section-1.2.3} we discuss some invariance lemmas that will be used throughout the paper and we prove Theorem 1.1. In Section \ref{section3} we prove Theorem 1.2. 

\paragraph{Acknowledgements} This paper evolves from a part of my PhD. thesis. I would like to thank Athanase Papadopoulos for introducing me to this field, Mustafa Korkmaz for his encouragement and comments on an earlier version of this paper, Dylan P. Thurston for the enlightening conversations on his works with Fomin and Shapiro, and an anonymous referee for the useful comments on an earlier version of this paper.
\section{Combinatorics of arc complexes}\label{ch1-section2}

\subsection{Topological set-up} 
In this paper we will use the following notation (see also Fomin-Shapiro-Thurston \cite{DTh}). 
Let $S_{g,b}^s$  ($S$ for short) be a compact orientable surface of genus $g\geq 0$, whose boundary $\partial S$ has $b > 0$ boundary components $\mathscr B_1, \ldots , \mathscr B_b$. 
We assume that $S$  has $s\geq 0$ marked points in its interior and each $\mathscr B_i$ has exactly $p_i\geq 1$ marked points. We will denote by $\p= (p_1, \ldots , p_b) \in \mathbb N^b  \setminus \{0\}$ the vector of the marked points. 
We will denote by $\mathscr{P}$ the set of marked points on $\partial S$ and by $\mathscr S$ the set of marked points in the interior of $S$. We will often refer to the points in $\mathscr S$ as \emph{punctures}. A \emph{bordered surface with punctures and marked points} (or a \emph{ciliated surface}) is such a pair $(S_{g,b}^s, \p)$. We will often refer to the disk $(S_{0,1}^s, (n))$ as the $n$-\emph{polygon} with $s$ punctures. \\ 

In this paper we will be interested in the combinatorics of the arcs on $(S,\p)$. Every arc here is \emph{essential}, that is, it enjoys the following properties:   
\begin{itemize}
\item its endpoints  are marked points in $\mathscr P \cup \mathscr S$;
\item it does not intersect $\mathscr P \cup \mathscr S$, except at its endpoints; 
\item it does not self-intersect, possibly except at its endpoints;
\item it does not intersect the boundary of $S$, possibly except at its endpoints. 
\item it is not homotopic to a segment of boundary between two consecutive points of $\mathscr P$ on the same boundary component of $S$. 
\end{itemize}
All the arcs here will be considered up to isotopies among arcs of the same type fixing $\mathscr P \cup \mathscr S$ pointwise. If the endpoints of an arc coincide, we will often call it a \emph{loop}. 

\begin{definition}
Let $\alpha, \beta$ be two arcs in $(S, \p)$. We define their \emph{intersection number} $\iota(\alpha,\beta)$ as follows: 
$$\iota(\alpha,\beta) = \mathrm{min } | \mathring a \cap \mathring b | ,$$ 
where $a$ is an essential arc in the isotopy class of $\alpha$ and $\mathring a$ is its interior, and $\beta$ is an essential arc in the isotopy class of $\beta$ ($\mathring b$ is its interior). 
\end{definition}

We will say that $\alpha$ and $\beta$ are \emph{disjoint} when $\iota(\alpha, \beta) =0$, that is, $\alpha$ and $\beta$ have no intersection in the interior. Disjoint arcs are allowed to share one or both endpoints. A maximal collection of pairwise disjoint essential arcs is called a \emph{triangulation} of $(S,\p)$. The arcs of a triangulation cut $(S,\p)$ into \emph{triangles}. Triangles can be embedded or immersed. Self-folded triangles and triangles with one or two sides on the boundary are also allowed here. By the Euler characteristic formula each triangulation of $(S,\p)$ contains $ 6g +3b + 3s +  |\mathscr P| - 6$ arcs. Two triangulations of $(S,\p)$ are obtained from each other by a \emph{flip} when they are the same except for a quadrilateral where one diagonal is replaced by the other one. The following are well-known facts (see Farb-Margalit \cite{FM}) we will use later.
\begin{proposition}\label{connected}
The following holds:
\begin{enumerate}
\item Any set of pairwise disjoint arcs in $(S,\p)$ can be extended to a triangulation. 
\item If two triangulations of $(S,\p)$ share all arcs but one, then the two arcs intersect exactly once and the two triangulations differ by a flip. 
\item Any two triangulations on a surface differ by a finite number of flips.\end{enumerate}
\end{proposition}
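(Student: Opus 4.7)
The plan is to address each of the three parts of the proposition in turn, following the standard strategy used by Farb-Margalit for punctured surfaces, but carefully adapted to handle the boundary and marked points.

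For (1), I would argue by extension. Given a collection $\mathcal{A}$ of pairwise disjoint essential arcs, cut $(S,\p)$ along $\mathcal{A}$ to obtain a disjoint union of complementary regions. Each region is a (possibly punctured) surface whose boundary is an alternating concatenation of arcs of $\mathcal{A}$ and arcs of $\partial S$ (each boundary arc joining consecutive marked points of $\mathscr{P}$). Say a region is a \emph{triangle} if it is an unpunctured disk with exactly three such boundary segments in $\mathcal{A} \cup \partial S$ (counted with the natural conventions for self-folded cases). The key claim is that if some complementary region $R$ is not a triangle, then there exists an essential arc in the interior of $R$ that further subdivides it, and this arc can be added to $\mathcal{A}$. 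This is verified by a small case analysis on the topological type of $R$ (positive genus, punctured disk, multi-gon, annulus, \emph{etc.}), always using a marked point of $R$ as an endpoint. Since each triangulation contains exactly $6g + 3b + 3s + |\mathscr{P}| - 6$ arcs by the Euler characteristic formula stated in the text, the process terminates.

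For (2), let $T_1, T_2$ be triangulations with $T_1 \setminus \{\alpha_1\} = T_2 \setminus \{\alpha_2\}$. Cut $(S,\p)$ along the common arcs $T_0 := T_1 \cap T_2$. All complementary regions of $T_0$ are triangles except for one region $Q$, formed by gluing the two triangles of $T_1$ adjacent to $\alpha_1$; the arc $\alpha_1$ is a diagonal of $Q$. The arc $\alpha_2$ must also lie in the interior of $Q$ and be essential there. By enumerating the possible shapes of $Q$ in a ciliated surface (an embedded quadrilateral, a self-folded configuration, a once-punctured bigon, \emph{etc.}) one checks in each case that $Q$ admits exactly two essential interior arcs, namely $\alpha_1$ and $\alpha_2$, and that $\iota(\alpha_1,\alpha_2)=1$.

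For (3), I would induct on the total intersection number $I(T,T') := \sum_{\alpha \in T,\,\beta \in T'} \iota(\alpha,\beta)$. If $I(T,T')=0$, then $T \cup T'$ is a disjoint family, so by the maximality of $T$ and $T'$ we get $T=T'$. Otherwise, pick $\alpha \in T \setminus T'$ and look at the quadrilateral $Q_\alpha$ of $T$ containing $\alpha$ as a diagonal (as in step~(2)); flipping $\alpha$ yields a new triangulation $T_\alpha$. The geometric core of the argument is to exhibit a choice of $\alpha$ for which $I(T_\alpha,T') < I(T,T')$; this is done by picking an arc $\beta \in T'$ with $\iota(\alpha,\beta)$ maximal and analyzing how $\beta$ crosses $Q_\alpha$. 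Induction on $I$ then concludes the proof.

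The main obstacle will be the case analysis arising from degenerate configurations, which are far more abundant in the ciliated setting than in the closed or fully punctured setting: self-folded triangles, triangles with one or two sides on $\partial S$, and regions containing punctures all force one to verify the two-diagonal property in (2) and the flip-reduction step in (3) in several additional sub-cases. None of these cases presents a genuinely new idea, but collectively they are the most technical part of the argument.
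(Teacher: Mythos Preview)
The paper does not give its own proof of this proposition: it simply records the three statements as ``well-known facts (see Farb--Margalit \cite{FM})'' and moves on. So there is no proof in the paper to compare your proposal against.

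Your sketch is the standard argument and is essentially correct. One small caution for part~(3): in the ciliated/punctured setting not every arc of a triangulation is flippable (the inner edge of a self-folded triangle sits in a once-punctured monogon, not a quadrilateral), so the sentence ``look at the quadrilateral $Q_\alpha$ of $T$ containing $\alpha$'' needs the proviso that you choose a \emph{flippable} $\alpha$. The usual fix is to run the induction from the other side: pick $\beta\in T'\setminus T$ and use an innermost-intersection argument on the triangles of $T$ crossed by $\beta$ to locate a flippable arc of $T$ whose flip strictly decreases the total intersection with $\beta$ (hence with $T'$). This is exactly the argument in Farb--Margalit, and with that adjustment your outline goes through.
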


\subsection{The arc complex $A(S,\p)$}\label{ch1-1}
In this section we will define the arc complex, recall some known properties and list some examples in low dimensions. 

\begin{definition}
The arc complex $A(S, \p)$ is the simplicial complex whose vertices are the equivalence classes of arcs with endpoints in $\mathscr P \cup \s$ modulo isotopy fixing $\mathscr{P}\cup \s$ pointwise, and whose simplices are defined as follows.
%We consider arcs as simple and not homotopic to a piece of boundary between two consecutive points of $\p$.  
A set of vertices $\langle  a_1, \ldots, a_k  \rangle$  spans a $(k-1)$-simplex if and only if $a_1, \ldots, a_k$ can be realized simultaneously as arcs on $(S,\p)$ mutually disjoint in the interior. 
%In the case $(b, \mathbf{p}) =(0, \varnothing)$, let us denote $A(S)= A(S_{g,0,s}, \varnothing)$. 
\end{definition}

A simplex of maximal dimension in $A(S,\p)$ corresponds to a triangulation of $(S, \p)$. 
Proposition \ref{connected} and the Euler characteristic formula imply the following.
\begin{proposition}
The following holds: 
\begin{enumerate}
\item every simplex in $A(S,\p)$ can be extended to a simplex of maximal dimension.
\item each simplex of codimension 1 is a face of at most two maximal simplices. 
\item the one skeleton of the cellular complex dual to $A(S,\p)$ is arcwise connected.
\end{enumerate}
\end{proposition}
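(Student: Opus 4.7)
The plan is to verify each of the three statements as a direct consequence of the corresponding part of Proposition \ref{connected}, using the Euler characteristic count to ensure that "maximal-dimensional" is unambiguous.

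For part (1), a simplex of $A(S,\p)$ is by definition a set of pairwise disjoint essential arcs, which by Proposition \ref{connected}(1) extends to a triangulation. The Euler characteristic formula stated in the text guarantees that every triangulation contains exactly $6g+3b+3s+|\mathscr{P}|-6$ arcs, so every triangulation corresponds to a simplex of the same dimension; hence these are precisely the maximal simplices of $A(S,\p)$ and every simplex is a face of one of them.

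For part (2), I would argue by contradiction. Suppose a codimension-$1$ simplex $\sigma=\langle a_1,\ldots,a_{n-1}\rangle$ (with $n=6g+3b+3s+|\mathscr{P}|-6$) is contained in three distinct maximal simplices, obtained by adjoining arcs $b_1,b_2,b_3$. Each pair of the resulting triangulations shares all arcs but one, so by Proposition \ref{connected}(2) each pair differs by a flip, and in particular $b_i,b_j$ cross exactly once for $i\neq j$. On the other hand, after removing the arc from $\sigma$ we are filling in a diagonal of the (possibly degenerate) quadrilateral formed by the two triangles incident to the missing arc, and such a quadrilateral admits at most two diagonals; having three pairwise once-crossing diagonals is impossible. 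Hence there are at most two maximal extensions.

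For part (3), the $1$-skeleton of the dual complex is the graph whose vertices are the maximal simplices of $A(S,\p)$ (i.e.\ triangulations) and whose edges are the codimension-$1$ simplices, each joining the (at most two, by part (2)) triangulations containing it; thus each edge corresponds to a flip between its two endpoints. Proposition \ref{connected}(3) says that any two triangulations of $(S,\p)$ are connected by a finite chain of flips, which translates exactly to the existence of an edge-path between any two vertices of this graph. Arcwise connectedness of the $1$-skeleton then follows because the $1$-skeleton is a $1$-dimensional CW complex and combinatorial connectedness equals arcwise connectedness.

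I do not expect a genuine obstacle here: the entire proposition is a straightforward dualization of Proposition \ref{connected} once one observes that the maximal dimension of $A(S,\p)$ is determined by the Euler characteristic. The only place requiring mild care is part (2) in degenerate configurations (self-folded triangles, triangles with boundary sides, loops sharing both endpoints), but these are already absorbed into Proposition \ref{connected}(2) as stated.
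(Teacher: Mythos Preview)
Your argument is correct and matches the paper's approach exactly: the paper simply asserts that this proposition follows from Proposition~\ref{connected} together with the Euler characteristic formula, and you have spelled out precisely how each part follows. Your only slip is phrasing in part~(2) (``after removing the arc from $\sigma$'' should read ``the complementary region of $\sigma$, obtained by removing $b_i$ from the triangulation $\sigma\cup\{b_i\}$''), but the mathematics is right.
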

\begin{lemma}\label{dimension}The following holds: 
\begin{enumerate}
\item The arc complex $A(S,\p)$ has finitely many vertices if and only if $(S, \p)$ is a polygon with at most one puncture in the interior. 
\item The dimension of $A(S,\p)$ is equal to  $6g + 3b + 3s + |\mathscr{P}| - 7$.
\end{enumerate}
\end{lemma}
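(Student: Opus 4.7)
The plan is as follows. Part (2) follows immediately from the Euler characteristic calculation invoked before the lemma. Explicitly, a triangulation of $(S,\p)$ has $V = |\mathscr P| + s$ vertices, $E = n + |\mathscr P|$ edges (with $n$ interior arcs and $|\mathscr P|$ boundary segments, since each boundary component with $p_i$ marks contributes $p_i$ boundary segments), and $F$ triangles satisfying $3F = 2n + |\mathscr P|$ (each interior edge lies in two triangles, each boundary edge in one). Substituting into $V - E + F = \chi(S) = 2 - 2g - b$ and solving gives $n = 6g + 3b + 3s + |\mathscr P| - 6$. Since every maximal simplex of $A(S,\p)$ corresponds to a triangulation by Proposition \ref{connected}, the dimension is $n - 1$.

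For part (1), the easy direction is by direct enumeration. If $(S,\p) = (S_{0,1}^0, (n))$ is an unpunctured polygon, then $S$ is simply connected, so any two simple arcs sharing endpoints are isotopic rel endpoints, and any loop at a boundary mark is inessential (it bounds an unmarked subdisk). Hence arcs are indexed by pairs of distinct boundary marks, giving at most $\binom{n}{2}$. If $(S,\p) = (S_{0,1}^1, (n))$ is a once-punctured polygon, a simple arc is determined by its endpoints together with a binary choice of side on which the puncture lies; splitting into the cases (boundary-to-boundary, boundary-to-puncture, loop at the puncture, loop at a boundary mark enclosing the puncture) produces a finite enumeration.

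For the converse, assume $(S,\p)$ is not a polygon with at most one puncture, so at least one of $g \geq 1$, $b \geq 2$, or ($g = 0$, $b = 1$, $s \geq 2$) holds. I produce in each sub-case an essential simple closed curve $\gamma \subset S \setminus (\mathscr P \cup \mathscr S)$ together with an arc $\alpha$ satisfying $\iota(\alpha, \gamma) \geq 1$: if $g \geq 1$, take $\gamma$ a non-separating meridian of a handle and $\alpha$ a dual longitudinal arc; if $b \geq 2$, take $\gamma$ separating two boundary components and $\alpha$ joining marked points on these two components; if $g = 0$, $b = 1$, $s \geq 2$, take $\gamma$ a small curve enclosing exactly one interior puncture $x$ and $\alpha$ any arc from $x$ to a marked point outside $\gamma$, which is then forced to cross $\gamma$. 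The iterated Dehn twists $\{T_\gamma^n(\alpha) : n \in \mathbb{Z}\}$ are then pairwise non-isotopic because $\iota(T_\gamma^n(\alpha), \alpha) \to \infty$ as $|n| \to \infty$, a standard consequence of the bigon criterion. The main obstacle, and the only non-routine step, is this case analysis and the verification that the chosen $\gamma$ is essential in $S \setminus (\mathscr P \cup \mathscr S)$ in each sub-case; once essentiality is in hand, the growth estimate on the twists is standard.
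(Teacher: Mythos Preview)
The paper does not supply a proof of this lemma; it is stated as a fact, with part~(2) implicitly attributed to the Euler characteristic count mentioned just before Proposition~\ref{connected}. So there is no ``paper's own proof'' to compare against, and I will simply assess your argument.

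Part~(2) is correct and is exactly the computation the paper alludes to. In part~(1), the finite direction and the sub-cases $g\ge 1$ and $b\ge 2$ of the infinite direction are fine.

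The third sub-case ($g=0$, $b=1$, $s\ge 2$) has a genuine error. You take $\gamma$ to be a small curve enclosing exactly one interior puncture $x$. Such a $\gamma$ bounds a once-punctured disk, and the Dehn twist $T_\gamma$ is then \emph{trivial} in $\mathrm{Mod}(S,\p)$: one isotopes $T_\gamma$ to the identity by shrinking the supporting annulus toward $x$, and this isotopy fixes $x$ (which sits at the centre, outside the annulus at every time) as well as every other marked point. Consequently $T_\gamma^n(\alpha)=\alpha$ for all $n$, the intersection numbers do not grow, and no new arcs are produced. The ``standard consequence of the bigon criterion'' you invoke requires $\gamma$ not to bound a disk or a once-punctured disk, and that hypothesis fails here.

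The repair is immediate given $s\ge 2$: choose $\gamma$ to enclose \emph{two} punctures (for instance the boundary-parallel curve, whose twist is still nontrivial in $\mathrm{Mod}(S,\p)$ since the boundary carries marked points). Then $\gamma$ bounds neither a disk nor a once-punctured disk, $T_\gamma$ has infinite order, and taking $\alpha$ to be an arc from one of the enclosed punctures to a boundary mark gives $\iota(\alpha,\gamma)=1$ and the desired growth $\iota(T_\gamma^n(\alpha),\alpha)\to\infty$.
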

The arc complex can be endowed with the natural shortest path distance where every edge has length one. The following is a well-known result (see for instance \cite{Hensel}). 
\begin{lemma}\label{infinite diameter}
If every boundary component of $(S,\p)$ has exactly one marked point (i.e.  $\p = (1, \ldots, 1)$) and $S$ is not a once-punctured disk, then $A(S,\p)$ has infinite diameter. 
%\item If $A(S,\p)$ has infinite vertices and $S$ is not a pair of pants, then $A(S,\p)$ has infinite diameter. \marginpar{also $A_{\sharp}(S,\p)$ ??? }
%\item For any surface $S \neq (S_{0,1,0}, (4))$ and any vector $\p$ of marked points, there does not exist a pair of distinct vertices $v_1, v_2$ such that $Lk(v_1, A(S,\p) = Lk(v_2, A(S,\p) )$.
\end{lemma}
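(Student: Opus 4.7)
The plan is to establish a lower bound on the distance $d_{A(S,\p)}$ in terms of the geometric intersection number $\iota$, and then to exhibit arcs of arbitrarily large intersection with a fixed reference arc; the combination forces $A(S,\p)$ to have infinite diameter. Fix $\alpha_0 \in A(S,\p)$.

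The first and main step is a Hempel-type estimate: there exist constants $c, C > 0$ depending only on $(S,\p)$ such that, for all arcs $\alpha, \beta$,
\[
d_{A(S,\p)}(\alpha, \beta) \,\geq\, c \log_2 \iota(\alpha,\beta) \,-\, C.
\]
I would prove this by induction on an edge-path in the $1$-skeleton of $A(S,\p)$. The inductive step rests on the following surgery observation: if $\gamma_1, \gamma_2$ are two disjoint arcs and $\mu$ is any third arc, then surgering $\mu$ along $\gamma_1$ yields an arc $\mu'$ disjoint from $\gamma_1$ with $\iota(\mu, \gamma_2) \leq 2\iota(\mu', \gamma_2) + O(1)$. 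Iterating along a geodesic in $A(S,\p)$ then shows that $\iota(\alpha,\beta)$ can grow at most exponentially with the combinatorial distance $d_{A(S,\p)}(\alpha,\beta)$, which is the claimed inequality. Some care is required near $\partial S$ and near the points of $\mathscr S$, since arcs are allowed to share endpoints while being disjoint in the sense of Definition 2.1.

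The second step is the construction of arcs with unbounded intersection with $\alpha_0$. Under the hypothesis $\p = (1,\ldots,1)$ and $(S,\p)$ not a once-punctured disc, the mapping class group $\mathrm{MCG}(S,\p)$ contains an element $\phi$ whose iterates dilate intersection numbers. A convenient choice, whenever the interior of $S$ admits a non-peripheral simple closed curve $c$ with $\iota(\alpha_0,c)>0$, is the Dehn twist $\phi = T_c$; an elementary count gives $\iota(\alpha_0, \phi^n \alpha_0) \geq |n|\, \iota(\alpha_0, c)^2 - C'$, which tends to infinity. For the very low-complexity pairs admitted by the hypothesis (for example a monogon with at least two punctures, or an unpunctured annulus with one marked point on each boundary), one can instead verify the claim by hand using the explicit combinatorics of the arc complex together with the action of a half-twist, a boundary Dehn twist, or twisting against a marked point.

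Combining the two steps,
\[
d_{A(S,\p)}(\alpha_0, \phi^n \alpha_0) \,\geq\, c \log_2 \iota(\alpha_0, \phi^n \alpha_0) \,-\, C \,\longrightarrow\, \infty,
\]
so $A(S,\p)$ has infinite diameter. The principal technical obstacle is the surgery inequality of the first step: the corresponding estimate for curve complexes of closed surfaces is classical (Hempel, Masur--Minsky), but transplanting it to the arc complex of a ciliated surface requires checking that the surgered arcs remain essential under the conventions of Section 2.1 --- in particular that one never accidentally produces an arc homotopic to a segment of $\partial S$ between consecutive points of $\mathscr P$. Once that verification is in place, everything else is routine.
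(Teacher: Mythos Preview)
The paper does not supply a proof of this lemma; it simply records it as well known and refers to \cite{Hensel}. Your proposal, however, has a genuine gap in Step~1: the inequality $d_{A(S,\p)}(\alpha,\beta)\ge c\log_2\iota(\alpha,\beta)-C$ is \emph{false} in general. Fix any arc $\gamma$ whose complement in $(S,\p)$ still has positive complexity; there are arcs $\alpha,\beta$ both disjoint from $\gamma$ (hence $d(\alpha,\beta)\le 2$) with $\iota(\alpha,\beta)$ arbitrarily large. The classical Hempel--Lickorish surgery argument yields the \emph{reverse} inequality $d\le 2\log_2\iota+O(1)$, an upper bound on distance, not a lower one. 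Your surgery claim is likewise incorrect as stated: surgering $\mu$ along $\gamma_1$ to obtain $\mu'$ gives no uniform lower bound on $\iota(\mu',\gamma_2)$ in terms of $\iota(\mu,\gamma_2)$ for a third arc $\gamma_2$, so the inductive step along a geodesic does not go through.

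Consequently Step~2, producing arcs of large intersection with $\alpha_0$ via Dehn twists, does not by itself force large distance in $A(S,\p)$. A correct route to infinite diameter uses a different mechanism: for instance, project $A(S,\p)$ to the annular arc complex of an essential simple closed curve $c$ (a coarsely Lipschitz map to $\mathbb Z$) and observe that $T_c$ acts there by nontrivial translation; or invoke the quasi-isometry between $A(S,\p)$ and the curve complex together with the Masur--Minsky infinite-diameter theorem; or show directly that a pseudo-Anosov element acts loxodromically. The very low-complexity cases you single out (for example $A(S_{0,2}^0,(1,1))\cong\mathbb Z$) can indeed be handled by hand, as you suggest.
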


%\begin{proof}
%If $A(S, \p)$ has a finitely many arcs, then $(S, \p)$ is planar, i.e. $g=0$. If $b\geq 2$ then $(S,\p)$ contains infinitely many twists of any arc connecting two distinct boundary components, so $b \leq 1$. If $s\geq 2$ then $(S,\p)$ contains an annulus $(S_{0,2}^0, (1, |\mathscr{P}|))$ as a subsurface, hence infinitely many arcs. The last statement follows from a straightforward Euler characteristic argument. 
%\end{proof}

The topology of the arc complex was studied by Hatcher \cite{Hat}, who proved the following. 
\begin{proposition}[Hatcher \cite{Hat}]
If $A(S, \p)$ has dimension at least 1, then it is arcwise connected. Except when $S$ is a disk or an annulus with $s=0$, $A(S, \p)$ is contractible. If $S$ is a disk with $s=0$, $A(S, \p)$ is PL-homeomorphic to a sphere. 
\end{proposition}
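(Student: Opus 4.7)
The statement has three distinct claims; I would address each in turn, with Proposition \ref{connected} as the main combinatorial input.

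\emph{Arcwise connectedness.} Parts (1) and (3) of Proposition \ref{connected} say that every vertex lies in some top simplex and that any two top simplices are linked by a finite sequence of flips. Since a flip between adjacent top simplices is realized by a single edge of $A(S,\p)$ (they share a codimension-one face), any two vertices are joined by an edge path through a flip sequence, provided $\dim A(S,\p) \geq 1$ so that flips make sense.

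\emph{Contractibility via arc surgery.} I would follow Hatcher's surgery flow. Fix a reference vertex $a_0$ and construct a deformation retraction of $A(S,\p)$ onto the closed star $\overline{\mathrm{st}}(a_0)$, which is a cone on the link of $a_0$ and therefore contractible. On each simplex $\sigma = \langle b_1, \dots, b_k\rangle$ in which some $b_i$ crosses $a_0$, cut $b_i$ at one of its intersections with $a_0$ and reglue the four resulting ends to the two pieces of $a_0$. This produces arcs of strictly smaller intersection number with $a_0$, still disjoint from each other and from the remaining $b_j$. Iterating terminates in a simplex containing $a_0$, and parametrizing the surgeries continuously and compatibly across faces of $\sigma$ yields the retraction. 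The excluded disk and annulus cases with $s = 0$ are precisely those in which this surgery can be forced to return a non-essential arc (a loop around a puncture or a segment homotopic to a boundary arc), obstructing the retraction onto $\overline{\mathrm{st}}(a_0)$.

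\emph{The unpunctured disk.} When $S$ is an $n$-gon, arcs are diagonals and $A(S,\p)$ is the simplicial complex of polygon dissections. It is canonically isomorphic to the simplicial polytope dual to the Stasheff associahedron $K_{n-1}$, whose boundary is PL-homeomorphic to $S^{n-4}$. This matches $\dim A(S,\p) = n - 4$ from Lemma \ref{dimension}.

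\emph{Main obstacle.} The technical heart of the argument is the surgery step: one must verify that iterated cut-and-paste actually produces essential arcs in the correct isotopy classes, that the homotopies defined on faces of a simplex agree on their common intersections so the retraction is globally continuous, and that the termination bound is uniform enough to parametrize the flow. This is also what isolates the exceptional cases, since they are precisely the surfaces in which surgery is forced to yield a non-essential arc, and for the unpunctured polygon one must fall back on the separate associahedron identification above.
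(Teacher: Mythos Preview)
The paper does not supply its own proof of this proposition; it simply quotes it as a result of Hatcher \cite{Hat}. Your outline is essentially a sketch of Hatcher's original argument (the surgery/Hatcher flow for contractibility, and the associahedron identification for the unpunctured polygon), so in that sense you are reproducing the intended source rather than diverging from it.

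One small imprecision worth flagging in your surgery description: cutting a single arc $b_i$ at one intersection point with $a_0$ produces \emph{two} free ends, not four, and $a_0$ itself is not cut into pieces. In Hatcher's flow one takes the intersection point of $b_i$ with $a_0$ that is closest along $a_0$ to a chosen endpoint of $a_0$, cuts $b_i$ there, and slides both resulting ends of $b_i$ along $a_0$ to that endpoint. Each of the two new arcs has strictly fewer crossings with $a_0$. Your ``four ends, two pieces of $a_0$'' phrasing suggests a different (and incorrect) operation. This is a local wording issue and does not affect the overall strategy, but it is exactly the kind of detail on which the ``main obstacle'' you identify---essentiality of the output arcs and compatibility across faces---actually turns, so it is worth getting right.
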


We will also be interested in the flip graph. 
\begin{definition}
Assume that $\dim A(S,\p) \geq 1$.The flip graph $F(S,\p)$ is the graph whose vertices correspond to ideal triangulations of $(S,\p)$ and whose edges correspond to flips between triangulations. In other words, $F(S,\p)$ is the 1-skeleton of the CW complex dual to $A(S,\p)$. 
\end{definition}

By Lemma \ref{connected} $F(S,\p)$ is arcwise connected. This graph enjoys many nice geometric properties (see Disarlo-Parlier \cite{Disarlo-Parlier} , Aramayona-Koberda-Parlier \cite{Hugo-3}). Korkmaz-Papadopoulos \cite{KP2} studied the automorphism group of the flip graph of a punctured surface. They did not consider ciliated surfaces as we do, nevertheless their proof applies word-by-word to our setting. We have:

\begin{theorem}[Korkmaz-Papadopoulos \cite{KP2} ]\label{KP}
If $\dim A(S,\p) \geq 1$, the automorphism group of $F(S,\p)$ is isomorphic to the automorphism group of $A(S,\p)$. 
\end{theorem}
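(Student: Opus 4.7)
The plan is to exhibit a natural homomorphism $\Phi : \mathrm{Aut}(A(S,\p)) \to \mathrm{Aut}(F(S,\p))$ and prove that it is both injective and surjective. Since $F(S,\p)$ is by definition the $1$-skeleton of the CW complex dual to $A(S,\p)$, every simplicial automorphism of $A(S,\p)$ permutes maximal simplices while preserving the codimension-$1$ face relation, hence induces a graph automorphism of $F(S,\p)$. The assignment $\phi\mapsto \Phi(\phi)$ is plainly a group homomorphism.

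For injectivity, suppose $\Phi(\phi)=\mathrm{id}$, so $\phi$ fixes every triangulation as a set. I would first prove a small separation lemma: for any two distinct arcs $a\neq b$ in $(S,\p)$ there exists a triangulation containing exactly one of them. When $\iota(a,b)>0$ this is immediate from Proposition \ref{connected}(1), which allows us to extend $\{a\}$ to a triangulation necessarily avoiding $b$. When $\iota(a,b)=0$, I would extend $\{a,b\}$ to a triangulation $T$ and then remove $b$: if $b$ is flippable in $T$, flip it directly; otherwise $b$ is the inner loop of a self-folded triangle whose outer loop $c\neq a$ lies in $T$, and flipping $c$ first makes $b$ flippable so that a second flip removes it while keeping $a$. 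This lemma tells us that the map $a\mapsto \mathcal{T}(a):=\{T : a\in T\}$ is injective. Since $\phi$ preserves every triangulation as a set, $\mathcal{T}(\phi(a))=\mathcal{T}(a)$ for every arc $a$, whence $\phi(a)=a$, proving $\phi=\mathrm{id}$.

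For surjectivity, given $\psi\in\mathrm{Aut}(F(S,\p))$, I would reconstruct a simplicial automorphism $\phi$ of $A(S,\p)$. Fix a triangulation $T_0$ and put $T_0':=\psi(T_0)$. By Proposition \ref{connected}(2), the edges of $F(S,\p)$ incident to $T_0$ are in canonical bijection with the flippable arcs of $T_0$, and analogously for $T_0'$, so $\psi$ induces a bijection between the flippable arcs of $T_0$ and those of $T_0'$. The non-flippable arcs of $T_0$ are exactly the inner loops of self-folded triangles and are uniquely paired with the surrounding flippable outer loops, so the bijection extends canonically to all arcs of $T_0$. For an arbitrary arc $a$ I would then choose a triangulation $T\ni a$, a flip-path in $F(S,\p)$ from $T_0$ to $T$ (Proposition \ref{connected}(3)), and transport the definition of $\phi$ along this path, using $\psi$ step by step.

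The main obstacle is well-definedness: the value $\phi(a)$ must depend neither on the chosen $T\ni a$ nor on the chosen path from $T_0$ to $T$. The key local point is that two $F(S,\p)$-edges incident to a common vertex $T$ correspond to flipping two different arcs of $T$, and these flips commute—producing a square in $F(S,\p)$—precisely when their flip-quadrilaterals are disjoint in $T$; other local configurations involving self-folded triangles have to be handled by hand. Once one checks that $\psi$ preserves these squares and the special configurations, the path-transported definition agrees around every loop in $F(S,\p)$, giving a well-defined bijection on arcs that sends disjoint collections to disjoint collections and hence a simplicial automorphism $\phi$ of $A(S,\p)$. Tracing the construction shows $\Phi(\phi)=\psi$, completing the proof.
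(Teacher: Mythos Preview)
The paper does not actually prove this statement: it is quoted from Korkmaz--Papadopoulos \cite{KP2}, with the remark that ``their proof applies word-by-word to our setting.'' So there is no in-paper argument to compare against, and your proposal should be judged on its own merits.

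Your injectivity argument is essentially correct but contains a small gap. In the separation lemma, when $\iota(a,b)=0$ and $b$ is the inner arc of a self-folded triangle in $T$, you assert that the outer loop $c$ satisfies $c\neq a$. This can fail: if $a$ itself is that outer loop, then \emph{every} triangulation containing $a$ must also contain $b$ (the once-punctured monogon bounded by $a$ has a unique interior arc). The lemma is still true in this case---flip $a$ (which is flippable) to obtain a triangulation containing $b$ but not $a$---but your argument as written does not cover it.

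The surjectivity sketch has a more substantial gap. To show the path-transported definition of $\phi$ is well defined you need to know that every closed loop in $F(S,\p)$ is generated by the local cycles you check. You mention commuting squares and ``special configurations'' but omit the \emph{pentagon relation} (five flips inside a pentagon returning to the start), which is essential. More importantly, the fact that squares and pentagons generate all relations in the flip graph is itself a nontrivial theorem (due to Harer/Hatcher and others on the simply-connectedness of the arc/triangulation complex) that you invoke implicitly without stating or citing it. Without that input your transport argument does not close. The Korkmaz--Papadopoulos proof proceeds along broadly similar lines but makes this dependence explicit; if you want a self-contained argument you must either cite that simply-connectedness result or give an alternative, e.g.\ a direct graph-theoretic characterization in $F(S,\p)$ of the subgraphs $\{T : a\in T\}$ for a fixed arc $a$.
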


\subsubsection{Low dimensional examples} Using the dimension formula for $A(S,\p)$ and studying the cases, we have the following.
\begin{lemma}[Low dimensional cases]\label{remark1} The following holds (see Figure \ref{low-dim-1}):
\begin{enumerate}
\item $A(S,\p) = \varnothing$ if and only if $(g,b,s) = (0,1,0)$ and $p_1 \in \{1,2,3 \}$. 
\item  $A(S,\p)$ has $\dim A(S,\p) = 0$ if and only if  $(S, \p)$ is one of the following: 
\begin{itemize}
\item $(g,b,s,\p) = (0,1,1;(1))$ (i.e. it is a 1-polygon with one puncture), and in this case $A(S_{0,1}^{1};(1))$ is a single vertex. 
\item $(g,b,s,\p)  = (0,1,0;(4))$ (i.e. it is a 4-polygon), and in this case $A(S_{0,1}^{0},(4))$ consists of two disjoint vertices. 
\end{itemize}
\item $A(S,\p)$ has $\dim A(S,\p) = 1$ if and only if $(S,\p)$ is one of the following: 
\begin{itemize}
\item $(g,b,s , \p ) = (0,1,0;(5))$ (i.e. it is a 5-polygon), in this case $A(S_{0,1}^{0};(5))$  is isomorphic to a segment of diameter 2.
\item  $(g,b,s, \p) =  (0,2,0;(2))$ (i.e. it is an annulus with one marked point on each boundary component), in this case $A(S_{0,2}^{0}, (1,1))$ is isomorphic to $\mathbb R$;
\item $(g,b,s, \p) = (0,1,1;(2))$ (i.e. it is a 2-polygon with one puncture), in this case  $A(S_{0,1}^{1};(2))$ is isomorphic to a circle and has diameter 3.
\end{itemize}
%\item  $\dim A(S,\p) = 2$  if and only if $(g,b,s, \p) \in \{(0, 1, 0; (6)), (0,1,1; (3)), (0,2,0;(1,2))\}$. (SERVE ??? ) 
\end{enumerate}
\end{lemma}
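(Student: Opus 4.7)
The plan is a finite case analysis driven by the dimension formula of Lemma \ref{dimension}. Writing $T := 6g + 3b + 3s + |\mathscr P|$, a triangulation of $(S, \p)$ contains $T - 6$ arcs, so $A(S, \p)$ is empty precisely when $T \leq 6$, is $0$-dimensional when $T = 7$, and is $1$-dimensional when $T = 8$. Each part of the statement thus reduces to two tasks: enumerating the integer solutions of the corresponding constraint on $T$ under $g, s \geq 0$, $b \geq 1$, $p_i \geq 1$, and then identifying the arc complex of each surface on the resulting short list.

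For the enumeration, any solution forces $g = 0$ (if $g \geq 1$ then $6g \geq 6$ leaves at most $2$ for $3b + 3s + |\mathscr P|$, which violates $3b \geq 3$) and $b \leq 2$ (else $3b \geq 9 > 8$). A short case split on $b \in \{1, 2\}$ produces exactly the listed surfaces: $(0, 1, 0; (p_1))$ with $p_1 \in \{1, 2, 3\}$ for $T \leq 6$; the square $(0, 1, 0; (4))$ and the once-punctured monogon $(0, 1, 1; (1))$ for $T = 7$; and the pentagon $(0, 1, 0; (5))$, the once-punctured digon $(0, 1, 1; (2))$, and the annulus $(0, 2, 0; (1, 1))$ for $T = 8$.

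What remains is to describe the arc complex in each of these surfaces. The polygon cases follow from the standard combinatorics of diagonals: two diagonals of an $n$-gon are disjoint iff they share a vertex, yielding two isolated vertices in $A(S_{0,1}^{0}, (4))$ (its two diagonals cross) and a pentagonal cycle of diameter $2$ in $A(S_{0,1}^{0}, (5))$. The once-punctured monogon admits only the radius from its boundary mark to the puncture as an essential arc, as any loop around the puncture is isotopic to the boundary loop of the monogon. For the annulus $(0, 2, 0; (1, 1))$, essential arcs all connect the two boundary marked points and are parameterized by $\mathbb Z$ via their winding around the core curve; consecutive windings are exactly the compatible pairs, yielding $A \cong \mathbb R$. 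The most delicate case is the once-punctured digon: one must enumerate the essential arcs (the radii from each boundary mark to the puncture, the loops around the puncture based at each boundary mark, together with any further classes produced by the Dehn twist about the puncture), and then verify by an intersection calculation that the pattern of pairwise disjointness assembles into the claimed cyclic structure of diameter $3$. The arc-counting and intersection analysis in this final case is the main substantive content of the proof; everything else is an immediate verification.
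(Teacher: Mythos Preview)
Your strategy---enumerate the integer solutions of $6g+3b+3s+|\mathscr P|\leq 8$ under the constraints $g,s\geq 0$, $b\geq 1$, $|\mathscr P|\geq b$, and then inspect each resulting surface by hand---is exactly what the paper indicates; its whole argument is the single line ``Using the dimension formula for $A(S,\p)$ and studying the cases''. So at the level of method there is nothing to compare.

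Two points on the execution. First, the assertion that two diagonals of an $n$-gon are disjoint iff they share a vertex is false once $n\geq 6$; it does hold for $n\leq 5$, which is all you use, but you should say so. Second, and more importantly, you do not actually carry out the once-punctured digon. Doing so yields exactly four essential arcs, the two radii $a_1,a_2$ and the two loops $\ell_1,\ell_2$: any simple arc joining the two boundary marks cobounds an unpunctured bigon with one of the two boundary segments and is therefore inessential, and the twist about a curve encircling the puncture can be unwound toward the puncture (the isotopy fixes the puncture and the boundary marks) and hence produces no new isotopy classes. The disjointness pattern is the path $\ell_1,\,a_1,\,a_2,\,\ell_2$, contractible and of diameter $3$. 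Together with your (correct) pentagon computation this reveals that the words ``segment'' and ``circle'' in the lemma have been interchanged: by the theorem of Hatcher quoted just above, the pentagon complex is a sphere (hence a circle of diameter $2$) while the once-punctured digon complex is contractible (hence a segment of diameter $3$).
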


\begin{figure}[htbp]
\begin{center}
\psfrag{S}{\small $(S,\p)$}
\psfrag{A}{\small $A(S,\p)$}
\includegraphics[width=8 cm]{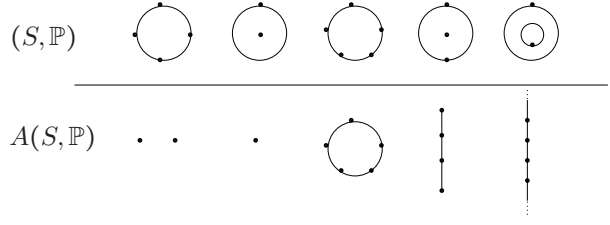}
\caption{Arc complexes of dimension at most $1$}\label{low-dim-1}
\end{center}
\end{figure}

% CYCLES 
%A well-known result about the flip graph is the following 
%\begin{proposition}
%The fundamental group of $F(S,\p)$ is generated by cycles of length 4 and cycles of length 5 pinned down to a basepoint. Moreover, we have: 
%\begin{itemize}
%\item cycles of length 4 correspond to squares as in Figure \ref{flips-1};
%\item cycles of length 5 correspond to pentagon as in Figure \ref{flips-2}. 
%\end{itemize}
%\end{proposition}

%\begin{figure}[htbp]\label{flips-1}
%\begin{center}
%\psfrag{a}{\small $ $}
%\psfrag{b}{\small  $ $}
 %\includegraphics[width=5cm]{2cells-1.eps}
%\end{center}
%\caption{Squares}
%\end{figure}

%\begin{figure}[htbp]\label{flips-2}
%\psfrag{a}{\small $ $}
%\psfrag{b}{\small  $ $}
%\includegraphics[width=5cm]{2-cells-2.eps}
%\caption{Pentagons}
%\end{figure}

\subsubsection{The mapping class group $\mathrm{Mod}(S, \p)$ action} 

Let $\mathrm{Homeo}(S,\p)$ be the group of homeomorphisms of $S$ fixing $\mathscr{P}\cup \mathscr S$ as a set. Let $\mathrm{Homeo}_0(S,\p) \subseteq \mathrm{Homeo}(S,\p)$ be the normal subgroup consisting of homeomorphisms isotopic to the identity through isotopy fixing $\mathscr P \cup \mathscr S$. 
The \emph{mapping class group of } $(S,\p)$ is the quotient group $\mathrm{\mathrm{Mod}}(S, \p)= \mathrm{Homeo}(S, \p)/\mathrm{Homeo}_0(S,\p)$. 
The \emph{pure mapping class group of }$(S,\p) $ is its subgroup $\mathrm{PMod}(S,\p)$ generated by the homeomorphisms  fixing the boundary of $S$ and the punctures $\mathscr{S}$ pointwise. The \emph{orientation preserving mapping class group} $\mathrm{Mod}^+(S,\p)$ is the subgroup of $\mathrm{Mod}(S,\p)$ generated by orientation-preserving homeomorphisms. The subgroup $\mathrm{Mod}^+(S,\p)$ has index 2 in $\mathrm{Mod}(S,\p)$. 

Denote by $\mathfrak{S} _{n}$ be the symmetric group on $n$ elements. For every $i=1, \ldots, b$, let $r_i$ be the number of boundary components having exactly $p_i$ marked points. The following proposition is immediate. 
\begin{proposition}\label{ES}
There is a short exact sequence: 
$$ 1 \to \mathrm{PMod}(S, \p)\to \mathrm{\mathrm{Mod}}^+(S, \p) \to \bigoplus_{i=1}^b (\mathfrak{S} _{r_i} \ltimes \mathbb Z_{p_i})\oplus \mathfrak{S} _{s} \to 1 .$$ 
%If $s=0$  then $\mathrm{\mathrm{Mod}}(S,\p)$ is generated by $R_\p$ and the Dehn twists about simple closed curves not parallel to $\partial S$. 
%Moreover, the map $\rho_{\frac{2\pi}{p_i}}$ is in the center of $\mathrm{\mathrm{Mod}}(S,\p)$  for every $i=1, \dots b$.
\end{proposition}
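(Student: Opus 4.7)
The plan is to define the obvious combinatorial map out of $\mathrm{Mod}^+(S,\p)$, verify surjectivity by exhibiting explicit homeomorphisms realizing each factor, and then identify the kernel with $\mathrm{PMod}(S,\p)$ via a standard Alexander-trick argument.

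First I would define the candidate quotient map $\phi$ as follows. Given a representative orientation-preserving homeomorphism $f$ of a class in $\mathrm{Mod}^+(S,\p)$, record: the permutation induced on the set $\mathscr S$ of punctures (an element of $\mathfrak{S}_s$); the permutation induced on boundary components, which must preserve the number of marked points and so lives in the appropriate products of $\mathfrak{S}_{r_i}$; and, for each boundary component $\mathscr{B}_i$, the cyclic rotation induced on its $p_i$ marked points, which is a well-defined element of $\mathbb{Z}_{p_i}$ since $f$ is orientation-preserving. Because isotopies in the definition of $\mathrm{Mod}^+(S,\p)$ fix $\mathscr{P} \cup \mathscr{S}$ pointwise, these data depend only on the mapping class, and functoriality makes $\phi$ a group homomorphism into the target.

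To prove surjectivity I would realize each generator of the target by an explicit homeomorphism: a transposition in $\mathfrak{S}_s$ by an orientation-preserving homeomorphism supported in a disk enclosing two punctures (a half-twist); a transposition in $\mathfrak{S}_{r_i}$ swapping two boundary components with $p_i$ marked points by a homeomorphism supported in an embedded pair-of-pants neighborhood containing them, chosen to match their marked points; and a generator of each $\mathbb{Z}_{p_i}$ by a homeomorphism supported in a collar annulus of $\mathscr{B}_i$ that cyclically rotates its marked points by one step.

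For the kernel, the inclusion $\mathrm{PMod}(S,\p) \subseteq \ker\phi$ is immediate from the definition of the pure mapping class group. For the reverse inclusion, a class $[f] \in \ker\phi$ has a representative that preserves each boundary component setwise, fixes each marked point of $\mathscr P$ up to isotopy along that component, and fixes each puncture. Using isotopies supported in disjoint collar neighborhoods of the boundary components, one straightens $f$ on each $\mathscr{B}_i$ to the identity, using that an orientation-preserving self-homeomorphism of a circle fixing finitely many marked points setwise and inducing the trivial cyclic shift is isotopic to the identity rel those points; the isotopy then extends to the collar. After a similar local adjustment near each puncture, one obtains a representative of $[f]$ fixing $\partial S$ and $\mathscr S$ pointwise, so $[f] \in \mathrm{PMod}(S,\p)$.

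The main obstacle is the last step: one must be careful that the boundary-straightening isotopies can be performed simultaneously on all boundary components and then extended to the interior without destroying the fact that $f$ fixes the punctures. This is purely technical and is handled by choosing the collar neighborhoods and the puncture neighborhoods to be pairwise disjoint, after which each local Alexander-trick isotopy extends by the identity to the rest of $S$.
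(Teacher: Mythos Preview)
The paper does not prove this proposition at all; it simply declares it ``immediate'' and moves on. Your argument is correct and is precisely the standard verification one would give: define the combinatorial homomorphism recording the induced permutations and cyclic shifts, realize each generator of the target by an explicit local homeomorphism (half-twists, pair-of-pants swaps, collar rotations), and identify the kernel with $\mathrm{PMod}(S,\p)$ by straightening on disjoint collars. One small point worth tightening: when $f$ permutes boundary components nontrivially, the ``cyclic rotation on $\mathscr B_i$'' is only defined after fixing reference identifications among boundary components with the same number of marked points, and this is exactly what packages the data into the semidirect (wreath) product rather than a direct product; you clearly have this in mind, but it would be cleaner to say so explicitly when defining $\phi$.
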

We remark that $\mathrm{Mod}(S, \p)$ and its subgroups act naturally on the set of isotopy classes of arcs and triangulations. This action extends to a simplicial action on $A(S,\p)$ and $F(S,\p)$. There is a natural group homomorphism $ \rho: \mathrm{Mod}(S, \p) \to \mathrm{Aut} A(S,\p)$, where $\mathrm{Aut} A(S,\p)$ is the automorphism group of $A(S,\p)$.
Similarly, there is a natural group homomorphism $ \rho': \mathrm{Mod}(S, \p) \to \mathrm{Aut} F(S,\p)$, where $\mathrm{Aut} F(S,\p)$ is the automorphism group of $F(S,\p)$.

\begin{proposition}\label{one-to-one}
If $\dim A(S,\p) \geq 2$ then $\rho$ and $\rho'$ are injective.
\end{proposition}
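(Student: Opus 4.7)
The plan is to deduce injectivity of $\rho'$ from that of $\rho$ and then to eliminate $\ker\rho$ by an Alexander-type argument. The first reduction is essentially immediate: the isomorphism in Theorem \ref{KP} is realized by making an automorphism of $A(S,\p)$ act on its top-dimensional simplices (the ideal triangulations) and hence on the vertices of $F(S,\p)$, so it intertwines the two natural $\mathrm{Mod}(S,\p)$-actions and forces $\ker\rho=\ker\rho'$. It therefore suffices to prove that $\rho$ is injective.

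Suppose $\varphi\in\ker\rho$; then $\varphi$ fixes every isotopy class of essential arc. The first step is to show that $\varphi$ fixes every point of $\mathscr{P}\cup\mathscr{S}$. If some essential loop $\ell$ is based at a marked point $v$, then the $\varphi$-image of $\ell$ is an arc isotopic to $\ell$, hence based at the same endpoint, so $\varphi(v)=v$. Otherwise every essential arc at $v$ runs to a different marked point, and for any such arc $a$ from $v$ to $w$ the relation $\varphi([a])=[a]$ gives $\varphi(v)\in\{v,w\}$; a direct case analysis using $\dim A(S,\p)\geq 2$ then shows that every such $v$ is the endpoint of two arcs ending at distinct points $w_1\neq w_2$, whence $\varphi(v)\in\{v,w_1\}\cap\{v,w_2\}=\{v\}$.

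Next, fix a triangulation $T$ of $(S,\p)$. Each edge of $T$ is fixed by $\varphi$ as an isotopy class with fixed endpoints, so after an ambient isotopy supported near $T$ we may assume that $\varphi$ preserves $T$ setwise and fixes every non-loop edge pointwise. The remaining concern is that $\varphi$ might reverse some loop $\ell\in T$ at a point $v$; to rule this out, use part (2) of Proposition \ref{connected} to flip $\ell$ and produce an arc $\ell'$ crossing $\ell$ transversely once. Since $\varphi$ fixes $\ell'$ and its two halves lie on opposite local sides of $\ell$, the orientation of $\ell$ is forced to be preserved. Thus $\varphi$ fixes $T$ pointwise; each complementary region is a disk whose boundary is fixed; and the Alexander trick produces an isotopy from $\varphi$ to the identity, so $\varphi=1$ in $\mathrm{Mod}(S,\p)$.

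The principal obstacle I expect is the combination of vertex-pinning and loop-reversal exclusion, rather than the Alexander trick at the end. Both uses of $\dim A(S,\p)\geq 2$ are essential, since the low-dimensional examples in Lemma \ref{remark1} (for instance, the $\pi$-rotation of a square or the boundary Dehn twist on a once-punctured monogon) exhibit genuine elements of $\ker\rho$ that vanish precisely when the hypothesis is added.
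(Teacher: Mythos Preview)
Your proposal follows exactly the paper's approach: an element of $\ker\rho$ fixes every arc and hence every triangulation, and one finishes with the Alexander trick applied triangle by triangle. The paper's proof is a two-line sketch of this; you have supplied the details it omits (pinning the marked points, handling loop edges), and your reduction $\ker\rho'=\ker\rho$ via Theorem~\ref{KP} is a clean substitute for the paper's bare ``the same holds for $\rho'$''.

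One step needs tightening. Your loop-reversal exclusion via the flip $\ell'$ implicitly assumes the two halves of $\ell'$ are distinguished by $\varphi$; this is fine when $\ell'$ has distinct endpoints, but fails when $\ell'$ is itself a loop---which is unavoidable, for instance, on $(S_{g,1}^0,(1))$ with $g\ge 1$, where \emph{every} arc is a loop at the unique marked point. The clean fix bypasses the flip and looks at the two triangles of $T$ adjacent to $\ell$: if $\varphi$ fixed one of them while reversing $\ell$, it would have to swap that triangle's other two edges, contradicting that $\varphi$ fixes every edge of $T$ (the self-folded case is excluded because $\ell$, being a loop, cannot be the folded inner edge, and it cannot be the outer loop of self-folded triangles on \emph{both} sides in a surface with boundary); if instead $\varphi$ swapped the two adjacent triangles, they would share all three edges and $S$ would be closed. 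With this replacement your argument is complete and matches the paper's intended proof.
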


\begin{proof}
A mapping class in the kernel of $\rho$ is a mapping class that fixes the isotopy class of every arc on the surface, in particular it fixes the isotopy type of every triangulation. If $\dim A(S,\p) \geq 2$ then one can use Alexander's lemma to conclude that such a mapping class is the identity. The same holds for $\rho'$. 
\end{proof}

\subsection{The cluster algebra of a ciliated surface}
In \cite{DTh, DTh2} Fomin-Shapiro-Thurston define and study the properties of a cluster algebra $\mathscr A(S_{g,b}^s, \p)$ that is naturally associated to a ciliated surface $(S_{g,b}^s, \p)$ as in our setting. In particular, they proved that this cluster algebra provides positive examples for many conjectures in cluster algebra theory. Let us quickly recall what a cluster algebra is (for a more detailed introduction see Fomin-Zelevinsky \cite{Fomin-0}). A cluster algebra is a commutative ring equipped with a distinguished set of generators (\emph{cluster variables}) grouped into overlapping subsets (\emph{clusters}) of the same cardinality (\emph{rank}). A \emph{seed} for a cluster algebra of rank $n$ over $\mathbb Q$ is a pair $(\tau, B)$ where $\tau$ is a cluster made of $n$ algebraically independent 
rational functions and $B$ is a skew-symmetric $n \times n$ matrix. A \emph{mutation} on $(\tau , B)$ is an operation that transforms $(\tau, B$) into a new seed $(\tau', B')$ using 
very nice formulae encoded in the matrix $B$. The cluster algebra $\mathscr A(\tau, B)$ is the sub algebra of the field of rational functions over $\mathbb Q $ generated by all the clusters in the seeds that differ from $(\tau, B)$ by a finite number of mutations. The \emph{exchange graph} of a cluster algebra is a graph whose vertices correspond to seeds and whose edges correspond to mutations. The \emph{cluster complex} of a cluster algebra is a complex whose vertices correspond to cluster variables and whose maximal simplices correspond to seeds. 
Two cluster algebras are \emph{(strongly) isomorphic} if they are isomorphic as algebras under an isomorphism that maps clusters to clusters.  

Let us now quickly describe the Fomin-Shapiro-Thurston construction in the case when the surface has no puncture in the interior (for details see \cite{DTh}). Fix a triangulation $\tau$ of $(S, \p)$ and label its arcs by the numbers $1, \ldots , N=6g+3b+3s+|\mathscr P| -6$. For each ideal triangle $\Delta$ in $S \setminus \tau$, define the $N \times N$ matrix $B^\Delta = (b_{ij}^\Delta)$ as follows 
$$b_{ij}^\Delta = \begin{cases}
1  & \mbox{ if $\Delta$ has sides labelled $i$ and $j$ with $j$ following $i$ in the clockwise order} \\ 
-1 & \mbox{ if the same holds with counterclockwise order} \\ 
0 & \mbox{ otherwise }
\end{cases} $$

The matrix defined as  $B(\tau) = \sum_{\Delta} B^{\Delta} $ is skew-symmetric. Choose a variable $t_i$ for each arc of the triangulation, and denote by $\tau$ the set of all the variables obtained. The pair $(\tau, B(\tau))$ is a seed and it defines a cluster algebra $\mathscr A_\tau(S,\p) = A(\tau, B(\tau))$ over $\mathbb Q$ of rank $N=6g+3b+3s+|\mathscr P| -6$. 
 
\begin{theorem}[Fomin-Shapiro-Thurston \cite{DTh} ]\label{Dylan} The cluster algebra $\mathscr A_\tau(S,\p)$ does not depend on the choice of $\tau$. Moreover, we have the following: 
\begin{itemize}
\item the cluster complex of $\mathscr A (S,\p)$ is isomorphic to the arc complex $A(S,\p)$; 
\item the exchange graph of $\mathscr A (S,\p)$ is isomorphic to the flip graph $F(S,\p)$. 
\end{itemize}
Under this isomorphism: 
\begin{itemize} 
\item cluster variables correspond to arcs; 
\item cluster seeds correspond to triangulations; 
\item mutations correspond to flips. 
\end{itemize}
\end{theorem}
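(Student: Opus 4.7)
The plan is to reduce the theorem to a local statement --- that each flip of triangulations corresponds to a matrix mutation of the associated exchange matrix --- and then deduce the global statements from the flip-connectivity provided by Proposition \ref{connected}.

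First, I would prove the \emph{flip-mutation compatibility}: given a triangulation $\tau$ and a flippable arc $k\in\tau$, if $\tau'$ denotes the flip of $\tau$ at $k$, then $B(\tau')=\mu_k(B(\tau))$, where $\mu_k$ is Fomin-Zelevinsky matrix mutation at index $k$. This is a purely local computation: only the entries of $B(\tau)$ involving the four arcs bounding the quadrilateral adjacent to $k$ change under the flip, and those changes can be read off directly from the defining formula for $B^\Delta$ by comparing the cyclic orientation of the two triangles adjacent to $k$ in $\tau$ versus $\tau'$. Since triangulations of $(S,\p)$ may contain self-folded triangles, triangles with two sides identified, or boundary arcs, I would organize the verification by enumerating the local configurations around $k$ (embedded quadrilateral, self-folded neighbour, quadrilateral with a side on the boundary, etc.) and checking the coincidence of signs and multiplicities in each case.

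Second, once flip-mutation compatibility is established, the independence of $\mathscr A_\tau(S,\p)$ from $\tau$ follows from Proposition \ref{connected}(3): any two triangulations are joined by a chain of flips, so the corresponding seeds $(\tau,B(\tau))$ and $(\tau',B(\tau'))$ are mutation-equivalent and therefore generate the same cluster algebra. Moreover, the exchange graph of the algebra admits a natural surjection onto $F(S,\p)$ sending seeds to their underlying triangulations and mutations to flips, and likewise the cluster complex surjects onto $A(S,\p)$. The remaining task is to upgrade these surjections to isomorphisms.

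The main obstacle is injectivity: distinct arcs must give distinct cluster variables, and distinct triangulations must give distinct seeds. This cannot be extracted from the combinatorics of mutations alone, since in principle different arcs could yield the same rational function under the iterated exchange relations. To resolve this I would appeal to Penner's decorated Teichm\"uller theory: each arc $\alpha$ of $(S,\p)$ has a positive \emph{lambda length} $\lambda_\alpha$ defined as a function on the decorated Teichm\"uller space $\widetilde{\mathcal T}(S,\p)$; the lambda lengths along any triangulation form a system of algebraically independent coordinates on $\widetilde{\mathcal T}(S,\p)$; and Penner's Ptolemy relation between lambda lengths across a flip is exactly the exchange relation of the cluster mutation verified in the first step. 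The assignment $t_\alpha\mapsto\lambda_\alpha$ therefore extends to an injective algebra homomorphism of $\mathscr A(S,\p)$ into the field of functions on $\widetilde{\mathcal T}(S,\p)$, under which distinct arcs map to distinct functions. Injectivity at the level of arcs and triangulations follows, and combined with the flip-mutation compatibility this yields the identifications of the cluster complex with $A(S,\p)$ and of the exchange graph with $F(S,\p)$, together with the matching between cluster variables, seeds, mutations and arcs, triangulations, flips.
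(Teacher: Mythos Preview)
The paper does not prove this theorem: it is quoted as a result of Fomin--Shapiro--Thurston \cite{DTh} and is used as a black box to derive the corollaries on cluster algebras. There is therefore no ``paper's own proof'' to compare against; your proposal is a sketch of the original argument rather than of anything in this paper.

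That said, as a sketch of the Fomin--Shapiro--Thurston proof your outline is sound and hits the essential points: the local flip--mutation compatibility, globalization via flip-connectivity (Proposition~\ref{connected}), and the use of Penner's lambda lengths as a positive realization to separate cluster variables. One small imprecision: the construction described in this paper is explicitly restricted to surfaces \emph{without} punctures ($s=0$), and in that case no self-folded triangles occur, so your case analysis can be trimmed accordingly. Also, the surjection from the exchange graph onto $F(S,\p)$ is not quite automatic: one must know that every mutation of a seed arising from a triangulation again arises from a triangulation, which is exactly what the flip--mutation compatibility provides, but you should state that direction explicitly rather than leaving it implicit.
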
   

Put in this setting, our rigidity theorems imply the following (Corollaries D -- G): 
\begin{corollary}\label{Dylan2}
If $(S,\p)$ and $(S',\p')$ are ciliated surfaces without punctures then the cluster complexes of their cluster algebras $\mathscr A(S,\p)$ and $\mathscr A(S',\p')$ are strongly isomorphic if and only if $(S,\p)$ and $(S',\p')$ are homeomorphic. 
\end{corollary}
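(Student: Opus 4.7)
The plan is to reduce the statement to Theorem 1.1 via the Fomin-Shapiro-Thurston dictionary of Theorem \ref{Dylan}. Since $(S,\p)$ and $(S',\p')$ have no punctures, Theorem \ref{Dylan} provides canonical isomorphisms of simplicial complexes between the cluster complex of $\mathscr{A}(S,\p)$ and $A(S,\p)$, and likewise for the primed pair. Under these isomorphisms, cluster variables correspond to arcs and cluster seeds correspond to (maximal simplices of) triangulations.

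For the nontrivial direction, I would start with a strong isomorphism $\Phi$ between the cluster complexes. By the FST dictionary, $\Phi$ is realized as a bijection between the vertex sets of $A(S,\p)$ and $A(S',\p')$ that sends maximal simplices to maximal simplices. Since every simplex of the arc complex is a face of a maximal one (Proposition 2.4(1)) and every $k$-simplex can be recovered combinatorially from the collection of triangulations in which it is contained, $\Phi$ descends to a simplicial isomorphism $A(S,\p) \cong A(S',\p')$. Theorem 1.1 then yields a homeomorphism $(S,\p) \cong (S',\p')$.

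For the converse, a homeomorphism $(S,\p) \to (S',\p')$ of ciliated surfaces sending $\mathscr{P}$ to $\mathscr{P}'$ induces a simplicial isomorphism $A(S,\p) \to A(S',\p')$ in the obvious way, and this isomorphism sends triangulations to triangulations. Reading it back through the FST dictionary produces an isomorphism of cluster complexes sending cluster seeds to cluster seeds, i.e.\ a strong isomorphism, which is what is required.

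The main conceptual point, rather than a real obstacle, is to observe that in the presence of punctures the dictionary of Theorem \ref{Dylan} fails (because self-folded triangles force one to distinguish between an arc and its ``tagged'' version), so the hypothesis that both surfaces are without punctures is used crucially to identify the two complexes on the nose. Once this is noted, the argument is a short combination of Theorem 1.1 and Theorem \ref{Dylan}; no further geometric input is needed.
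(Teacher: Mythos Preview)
Your proposal is correct and follows essentially the same route as the paper: invoke Theorem~\ref{Dylan} to identify the cluster complex with the arc complex (this is where the no-punctures hypothesis is used), and then apply Theorem~1.1. The paper's own proof is even more terse, simply noting that an isomorphism of cluster complexes yields one of arc complexes via Theorem~\ref{Dylan}; your extra discussion of maximal simplices and the explicit converse are harmless but not needed, since by Theorem~\ref{Dylan} the cluster complex and the arc complex are already identified as simplicial complexes.
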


\begin{proof}
By Theorem \ref{Dylan} every isomorphism between the two cluster complexes induces an isomorphism between the two arc complexes  $A(S,\p)$ and $A(S',\p')$. By Theorem 1.1, this implies that $(S,\p)$ and $(S',\p')$ are homeomorphic. 
\end{proof}

\begin{corollary}
If $(S,\p)$ and $(S',\p')$ are ciliated surfaces without punctures then their cluster algebras $\mathscr A(S,\p)$ and $\mathscr A(S',\p')$ are strongly isomorphic if and only if 
$(S,\p)$ and $(S',\p')$ are homeomorphic. 
\end{corollary}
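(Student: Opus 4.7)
The plan is to derive this corollary directly from the previous one (Corollary \ref{Dylan2}) together with the definition of strong isomorphism.

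First I would handle the easy direction: if $(S,\p)$ and $(S',\p')$ are homeomorphic, then a homeomorphism induces a bijection between isotopy classes of arcs that preserves disjointness, hence an isomorphism of arc complexes $A(S,\p)\cong A(S',\p')$. By Theorem \ref{Dylan}, this identifies the cluster complexes, the cluster variables (vertices), and the seeds (maximal simplices), and the matching formulae defining the matrices $B(\tau)$ depend only on the combinatorics of the triangulation, so the induced map extends to a strong isomorphism $\mathscr{A}(S,\p)\to\mathscr{A}(S',\p')$.

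For the converse, suppose $\Phi\colon\mathscr{A}(S,\p)\to\mathscr{A}(S',\p')$ is a strong isomorphism of cluster algebras. By definition of \emph{strong} isomorphism, $\Phi$ maps the set of cluster variables of the source onto that of the target, and it maps clusters to clusters. In other words, $\Phi$ induces a bijection on the vertex sets of the two cluster complexes that carries maximal simplices to maximal simplices. Since a simplex of a cluster complex is, by definition, a subset of a cluster, this bijection preserves simplices in both directions, and hence is an isomorphism of the underlying cluster complexes.

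Now I would invoke Corollary \ref{Dylan2}: an isomorphism of cluster complexes of the cluster algebras associated to two ciliated surfaces without punctures forces the two surfaces to be homeomorphic. This completes the only-if direction.

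The only potential subtlety — and where I would be a little careful in the write-up rather than facing a real obstacle — is the bookkeeping of what ``strong isomorphism'' is actually required to preserve: strongness guarantees the cluster-to-cluster bijection, which is exactly what is needed to pass from an algebra isomorphism to a cluster-complex isomorphism. Everything beyond that is already contained in Theorem \ref{Dylan}, Corollary \ref{Dylan2}, and ultimately Theorem 1.1, so no new argument about the topology of the surface is required here.
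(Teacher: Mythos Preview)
Your proof is correct and follows essentially the same route as the paper: pass from a strong isomorphism of cluster algebras to an isomorphism of cluster complexes, then invoke Corollary~\ref{Dylan2}. The only cosmetic difference is that the paper cites Theorems~7.11 and~9.21 of \cite{DTh} to justify that a strong isomorphism induces a cluster complex isomorphism, whereas you argue this directly from the definition of strong isomorphism given in the paper; both justifications are adequate here.
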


\begin{proof}
By Theorem 7.11 and 9.21 in \cite{DTh}, a strong isomorphism between $\mathscr A(S,\p)$ and $\mathscr A(S',\p')$ induces an isomorphism between their two cluster complexes. The rest follows by corollary \ref{Dylan2}
\end{proof}

The following result is a straightforward application of Theorems \ref{Dylan}, Theorem 1.2 and Theorem \ref{KP}.  
\begin{corollary}
If $(S,\p)$ is a cilated surface without punctures then every automorphism of the cluster complex of $A(S,\p)$ is induced by an element of $\Mod(S, \p)$. Similarly, every automorphism of the exchange graph of $A(S,\p)$ is induced by an element of $\Mod(S, \p)$. 
\end{corollary}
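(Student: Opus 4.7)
The plan is to reduce both claims to the main theorems of the paper via the dictionary provided by Theorem \ref{Dylan}. Since $(S,\p)$ has no punctures, the cluster complex of $\mathscr A(S,\p)$ is not merely projected to but canonically isomorphic to the arc complex $A(S,\p)$, and similarly the exchange graph is canonically isomorphic to the flip graph $F(S,\p)$. Hence any automorphism $\Phi$ of the cluster complex transports, under this fixed isomorphism, to an automorphism $\varphi \in \mathrm{Aut}\, A(S,\p)$; symmetrically any automorphism of the exchange graph transports to an element of $\mathrm{Aut}\, F(S,\p)$.

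Next I would apply Theorem 1.2 to the automorphism $\varphi$ of $A(S,\p)$: it produces a mapping class $[h] \in \mathrm{Mod}(S,\p)$ whose simplicial action on $A(S,\p)$ realizes $\varphi$. Pulling back through the canonical isomorphism of Theorem \ref{Dylan}, the same mapping class acts on the cluster complex exactly as $\Phi$. For the exchange graph statement, I would first apply Theorem \ref{KP} (valid whenever $\dim A(S,\p) \geq 1$) to convert an automorphism of $F(S,\p)$ into an automorphism of $A(S,\p)$, then invoke Theorem 1.2 as before to obtain a mapping class, whose induced action on the flip graph coincides with the original automorphism of the exchange graph.

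The only genuine obstacle is bookkeeping at the boundary of the hypotheses: one must ensure we are never in a degenerate case where the translation breaks down. In our setting (no punctures) Lemma \ref{dimension} and Lemma \ref{remark1} leave only a short list of low-dimensional exceptions, namely the $4$-polygon, the $5$-polygon, and the annulus $S_{0,2}^0$ with $\p=(1,1)$. The $4$-polygon has $\dim A(S,\p)=0$, so its exchange graph is a single edge, and automorphisms are manifestly induced by the dihedral symmetries of the square. In the $5$-polygon and annulus cases $\dim A(S,\p)=1$, so Theorem \ref{KP} applies and Theorem 1.2 supplies the mapping class; the automorphism group identifications in these two cases are treated among the exceptional cases of Theorem 1.2. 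In every remaining case $\dim A(S,\p) \geq 2$, and Proposition \ref{one-to-one} shows that $\rho$ and $\rho'$ are injective, so the lift to $\mathrm{Mod}(S,\p)$ is unique; the chain of identifications is then a formal composition of the three input results, with no further content to verify.
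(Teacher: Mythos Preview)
Your proof is correct and follows essentially the same route as the paper's own proof, which simply states that the corollary is ``a straightforward application of Theorem~\ref{Dylan}, Theorem~1.2 and Theorem~\ref{KP}.'' Your additional bookkeeping on the low-dimensional exceptional cases and on the uniqueness of the lift via Proposition~\ref{one-to-one} is more careful than what the paper records, but it does not change the approach.
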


Fomin-Shapiro-Thurston also provide a similar construction for ciliated surfaces with punctures.  In this case the cluster complex is the \emph{tagged arc complex} $A^\Join(S, \p)$, a slightly modified arc complex whose vertices are triples $(\alpha, \tau_1, \tau_2)$ where $\alpha$ is a vertex in $A(S,\p)$ and $\tau_1, \tau_2 \in \{ \mbox{plain, notched}\} 
$ are tags at its endpoints. Similarly, its exchange graph is the \emph{tagged flipped graph} $F^\Join(S, \p)$, the 1-skeleton of the cellular complex dual to $A^\Join(S,\p)$. The compatibility relations that define simplices in this case is more technical and complicated (for details see \cite{DTh}), but there are natural 
projection maps $A^\Join(S, \p) \to A(S, \p)$ and $F^\Join(S, \p) \to F(S,\p)$.

\section{Proof of Theorem 1.1}\label{ch1-section-1.2.3}
In Subsection \ref{thmA:subsec1} we recall the definitions of links and join products of simplicial complexes, in Subsection \ref{sec:inv_lemma} we prove some invariance lemmas and Theorem 1.1.

\subsection{Links of vertices}\label{thmA:subsec1}
Let us first recall some standard definitions in simplicial topology.
\begin{definition} 
Let $K$ be a nonempty simplicial complex and let $\sigma$ be one of its simplices. The \emph{link} $\mathrm{Lk}(\sigma, K)$ of $\sigma$ (or $Lk(\sigma)$ for short) is the subcomplex of $K$ whose simplices are the simplices $\tau$ such that $\sigma \cap \tau = \varnothing$ and $ \sigma \cup \tau$ is a simplex of $K$. 
\end{definition}

\begin{definition}
Let $K$ and $H$ be two simplicial complexes whose vertex sets $V_1$ and $V_2$ are disjoint. 
The \emph{join of $K$ and $H$} is the simplicial complex $K \star H$ with vertex set $V_1 \cup V_2$; a subset $\sigma$ of $V_1 \cup V_2$ is a simplex of $K \star H$ if and only if $\sigma$ is a simplex of $K$, a simplex of $H$ or the union of a simplex of $K$ and a simplex of $H$. We have $\dim(K \star H) = \dim K + \dim H + 1$. 
A \emph{cone} $C(K)$ over $K$ a is simplicial complex isomorphic to $K \star \{ w\}$, for some vertex $w$.  
\end{definition}

The following remark will be very useful later. 
\begin{remark}
If $Z$ is a simplicial complex isomorphic to the join product of two non-empty simplicial complex, then $Z$ is arcwise connected and has diameter 2 with respect to its shortest path distance. 
\end{remark}

The link of a simplex in $A(S,\p)$ isomorphic to (the join of) the arc complex(es) of the ciliated surface(s) obtained by cutting along the arcs that span the simplex and recording the marked points involved in the cut (see  
%The link of a vertex is the (join of) arc complex(es) of the subsurface(s) obtained by cutting along the arc which corresponds to the vertex. Such subsurface(s) will have one or more boundary components (with new marked points on it) according on the geometric properties of the arc (separating, non-separating, joining one or two distinct boundary components, etc.). 
 Figure \ref{arco-1}).
 
\begin{figure}[htp!]
\begin{center}
\psfrag{v}{ \tiny $v$}
\includegraphics[width=16cm]{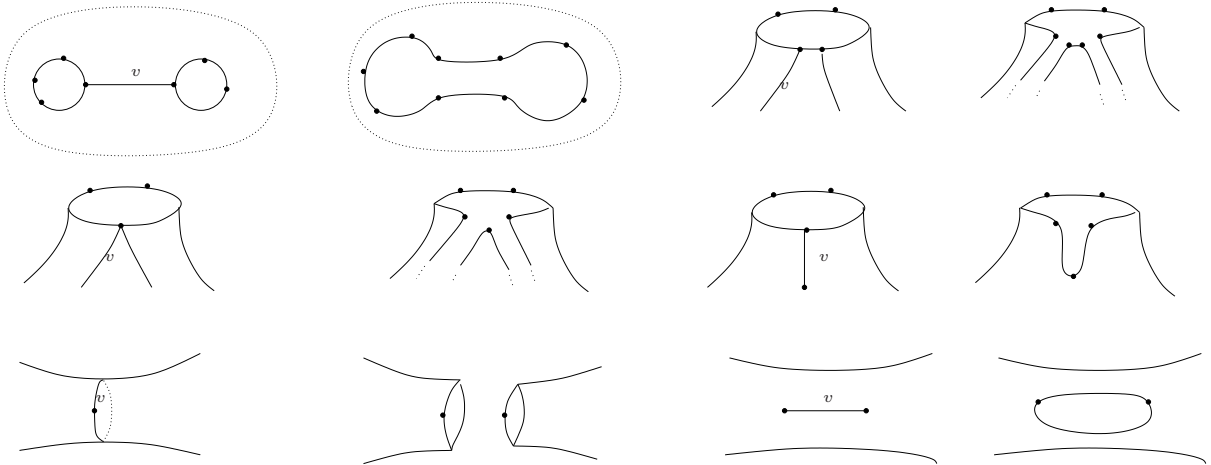}
\caption{Cutting $S$ along $v$}\label{arco-1}
\end{center}
\end{figure}

% the link of a vertex has a precise geometrical meaning: if the vertex corresponds to a non-separating arc, its link is the arc complex of the surface obtained by cutting along that arc (up to adding a suitable number of marked points on its new boundary components); if it is a separating arc, then its link corresponds to the join of the arc complexes of the two connected components (up to adding a suitable number of new marked points on the new boundary components), see Figure  \ref{arco-1}.   \\ 

\begin{remark}[Join structure of links]
The following holds:
\begin{enumerate}
\item If $v$ is a separating arc in $(S,\p)$, then $Lk(v) \cong A_1 \star A_2$, where $A_1$ and $A_2$ are the arc complexes of the two connected components of $S \setminus v$. If none of the connected components is a 3-polygon, $A_1, A_2$ are non-empty and $Lk(v)$ has diameter 2.  
\item If $v$ is a non-separating arc in $(S,\p)$, then $Lk(v) \cong K$ , where $K$ is the arc complex of $S \setminus v$ considered as a ciliated surface.  
\end{enumerate}
\end{remark}

The following lemma characterizes the vertices of $A(S,\p)$ with low-dimensional link. 
\begin{lemma}\label{remark1bis}
The following holds:
\begin{enumerate}
\item Let $v, w$ be two vertices in $A(S, \p)$. If $Lk(v)$ and $Lk(w) $ coincide as subsets of $A(S, \p)$, then  $v = w$ or $Lk(v) = Lk(w) = \varnothing$.
\item If $v$ is a vertex such that $Lk(v)= \varnothing$ then $(S,\p)$ is a one-punctured disk with one marked point on the boundary or a $4$-polygon.  The converse also holds. 
\item If $v$ is a vertex such that $Lk(v)= \{ w \}$ (i.e. it consists of one single vertex) then $(S,\p)$ is a one-punctured disk with two marked point on its boundary.  The converse also holds. 
\item If  $v$ is a vertex such that $Lk(v) = \{ z, w \}$ (i.e. it consists of exactly two disjoint vertices) then $(S,\p)$ is  either a 5-polygon, or a one-punctured disk with  2 marked points on the boundary, or an annulus with one marked point on each boundary component.  
\end{enumerate}
\end{lemma}

\begin{proof}
(1): The cases where $\dim A(S, \p) \leq 1$ can be checked directly from Figure \ref{low-dim-1}. Assume $\dim A(S, \p) \geq 2$, it follows that $Lk(v) = Lk(w) $ has dimension at least 1. Let $\sigma$ be a maximal simplex in $Lk(v) = Lk(w) $. By construction $\tau_{v} = \langle v, \sigma \rangle$ and $\tau_w = \langle w, \sigma \rangle$ are two simplices of maximal dimension in $A(S, \p)$, and represent two ideal triangulations of $(S,\p)$ that differ by one arc. By Proposition \ref{connected}, we have $\iota(v, w)  = 1$. Using surgery, we can construct a loop disjoint by $v$ (in a tubular neighborhood of $v$) that intersects $w$. This loop is an element of $Lk(v)$ but not of $Lk(w)$,  and we have a contradiction.    

To prove (2), (3) and (4), we remark that $\dim Lk(v, A(S, \p)) = \dim A(S, \p) - 1$.  the results follow immediately from the classification of the arc complexes of dimension 0 and 1 given in Lemma \ref{remark1}.\end{proof}

\subsection{The invariance lemmas}\label{sec:inv_lemma}
Here we will prove that the following classes of  vertices in $A(S,\p)$ are invariant under automorphisms: 
\begin{itemize}
\item  the vertices that correspond to the arcs surrounding an inner puncture (we will call them \emph{drops}, see Section \ref{sec:drops}); 
\item  the vertices that correspond to the arcs parallel to a segment of the boundary (we will call them \emph{petals}, see Section \ref{sec:petals});
\item  the vertices that correspond to the loops surrounding a boundary component (we will call them \emph{loop-islands}, see Section \ref{sec:edge-bridges}); 
\item  the vertices that correspond to non-separating arcs (see Section \ref{sec:2leaves}).
\end{itemize} 
We will use the maximal dimension of a simplex containing only drops/petals/loop-islands in order to count the number of punctures/marked points/boundary components. 
When $\dim A(S,\p) \leq 1$ their invariance can be checked directly by Lemma \ref{remark1}. In the following we will always assume $\dim A(S,\p) \geq 2$. \
\subsubsection{Simplicial invariance of edge-drops}\label{sec:drops}
\begin{definition}
An arc $l$ on $(S, \p)$ is a \emph{drop} if it is a simple loop that bounds a once-punctured disk (see Figure \ref{arco2}). An edge $\langle l,v \rangle$ in $A(S, \p)$ is an \emph{edge-drop} if $l$ is a drop and $v$ joins the basepoint of $l$ to the inner puncture as in Figure \ref{arco2}. 
\end{definition}
\begin{figure}[htbp]
\begin{center}
\psfrag{v}{\small $v$}
\psfrag{l}{$l$}
\includegraphics[width=7cm]{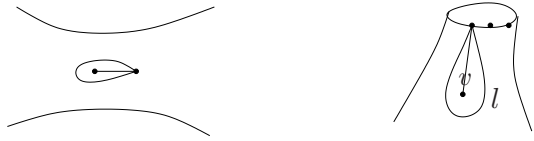}
\caption {Edge-drops  }\label{arco2}
\end{center}
\end{figure}

In the following we prove that the arc complex of a once-punctured disk $A(S_{0,1}^1,(1))$ behaves like a prime factor in the join product of two arc complexes. 
This will provide a simplicial characterization of drops. 
 
\begin{lemma}\label{cone}
The following holds:
\begin{enumerate}
\item $A(S,\p)$ is a cone if and only if $(S,\p)$ is a once-punctured disk with one marked point on the boundary, \emph{i.e.} if and only if $A(S,\p)$ consists of one single vertex. 
\item The join of two non-empty arc complexes is  a cone if and only if one of the two arc complexes consists of one single vertex.
\item  an arc $l$ is a drop if and only if its link in $A(S, \p)$ is isomorphic to a cone. 
\end{enumerate}
\end{lemma}

\begin{proof}
(1): If $A(S,\p)$ is a cone on a vertex $a$, then by definition $a$ is an arc on $(S,\p)$ disjoint by every other arc in $(S,\p)$. This can happen only if $(S,\p)$ has a unique essential arc, that is, if and only if $A(S,\p)$ consists of one single vertex.

(2): Let $A_1, A_2$ be arc complexes such that $A_1 \star A_2$ is a cone on a vertex $a$. If $a \in A_1$, then $a$ is disjoint by all the arcs in $A_1$. By (1), this is possible only if $A_1 = \langle a \rangle$. 

(3): Assume that $Lk(l)$ in $A(S, \p)$ is a cone. If $l$ is a non-separating arc, then $Lk(l)$ is isomorphic to the arc complex of $S \setminus l$. By (1), $S \setminus l$ would be isomorphic to a once-punctured disk with one point on its boundary, this would be in contradiction with the assumption that $l$ is nonseparating.  Assume that $l$ is a separating arc, and $Lk(l) = A_1 \star A_2$ where $A_1$ and  $A_2$ are the arc complexes of the connecting components of $S \setminus l$.  By (1) and (2) we deduce that one of the two connecting components of $S \setminus l$ is a once punctured disk with one point on its boundary. The converse of the statement is immediate.
\end{proof}

\begin{lemma}[Edge-drops invariance] \label{separating}
Let $\phi: A(S, \p) \to  A(S', \p')$ be an isomorphism.
The following holds: 
\begin{enumerate}
\item If $l$ is a drop, then $\phi(l)$ is a drop. 
\item If $\langle l,v \rangle$ is an edge-drop, then $\langle \phi(l), \phi(v) \rangle$ is an edge-drop. 
\end{enumerate}  
\end{lemma}

\begin{proof}
By Lemma \ref{cone} drops can be recognized by their link, and we have (1). Assertion (2) follows by (1) and the isomorphism between $Lk(l)$ and $Lk(\phi(l))$. 
\end{proof}

\begin{corollary}\label{punctures-s}
If $A(S_{g,b}^s,\p)$ is isomorphic to $A(S_{g',b'}^{s'},\p')$ then $s'=s$.
\end{corollary}

\begin{proof} 
The number of punctures on a surface is equal to the dimension of a simplex spanned by a maximal set of pairwise disjoint drops on it. By Lemma \ref{separating} drops are simplicial invariants, and so it is this simplex. 
\end{proof}

\subsubsection{Simplicial invariance of petals}\label{sec:petals}
\begin{definition}
Let $\mathscr B$ be a boundary component of $(S,\p)$ with $p \geq 2$ marked points on it. A $p$-\emph{leaf} on $(S,\p)$ is a simple loop based on $\mathscr B$ and parallel to $\mathscr B$. If $p \geq 3$ and $3\leq j\leq p$, a $j$-\emph{petal} on $(S,\p)$ is a separating arc parallel to $\mathscr B$ that bounds a $j$-polygon  (see Figure \ref{leaves_petals}).   
\begin{figure}[htbp]
\begin{center}
\includegraphics[width=6cm]{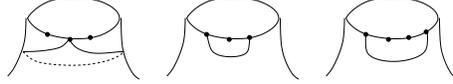}
\caption{A $3$-leaf, a $3$-petal and a $4$-petal }\label{leaves_petals}
\end{center}
\end{figure}
\end{definition}

We will prove that $j$-petals and $p$-leaves are invariant. We will first provide a simplicial characterization of $3$-leaves and $4$-petals. The key ingredient is that the arc complex of  the 4-polygon $A(S_{0,1}^0,(4))=\{v, w \}$  behaves like a prime factor for the join products of arc complexes. 
\begin{lemma} \label{remark2-NEW}
The following holds: 
\begin{enumerate}
\item Let $K=  \{ v , w \}$ be a simplicial complex that consists of two disjoint vertices. Let $H \neq \varnothing$ be a non-empty arc complex. If $K', H'$ are arc complexes such that $K  \star H$ is isomorphic to  $K' \star H'$, then $K'$ is isomorphic $K$  and $H'$ is isomorphic to $H$ (up to reordering). In particular, $K'$ and $H'$ are both non-empty. 
\item  An arc $z$ is a $4$-petal or a $3$-leaf on $(S,\p)$ if and only if $Lk(z, A(S, \p))= \{v, w\} \star H$, where $H \neq \varnothing$ is the arc complex of the surface obtained cutting $(S,\p)$ along $z$.
%\footnote{I must check it works}
\end{enumerate}
\end{lemma}

\begin{proof}
(1)  Note that it cannot be that both $K', H' = \varnothing$. Denote by $v'$ and $w'$ the isomorphic images of $v$ and $w$ in $K' \star H'$. Note that we have $Lk(v, K \star H) = H = Lk(w, K\star H)$, in particular we have: 
\begin{align}
Lk( v, K \star H) &= Lk( w , K \star H)  \mbox{ as subsets of } K \star H \\
Lk( v', K' \star H') &= Lk( w' , K' \star H') \mbox{ as subsets of } K' \star H'. \label{eq:remark2}
\end{align}
Now assume by contradiction that $H' = \varnothing$ and $K' \neq \varnothing$. Now $K' \star H' = K'$ is an arc complex that contains two vertices $v'$ and $w'$ whose links coincide as subsets of $K'$. By Lemma \ref{remark1bis} both links are empty, hence $K'$ has dimension $0$. By Lemma \ref{remark1} $K'$ has exactly 2 vertices but $K\star H$ has at least 3 vertices, contradiction. We thus have $K' \neq \varnothing$ and $H' \neq \varnothing$. Now remark that in $K\star H$ there is no edge between $v$ and $w$, and by isomorphism the same holds for $v'$ and $w'$, so if $v'$ belongs to $K'$ then also $w'$ belongs to $K'$. 
We have:  
\begin{align}
 Lk(v', K' \star H') &= Lk(v', K') \star H' \\ 
 Lk(w', K' \star H') &= Lk(w', K') \star H'\\ 
  Lk(v', K') \star H' &=Lk(w', K') \star H' \mbox{ as subsets of } K' \star H'
\end{align} 
It follows that $Lk(v', K') = Lk(w', K')$, and  $K'$ is an arc complex containing two different vertices $v',w'$ whose links coincide as subset of $K'$. By Lemma \ref{remark1bis} then $Lk(v', K') = Lk(w', K') = \varnothing$ and $K' = \{ v' , w' \} \cong K$. It follows that $H' \cong H$.  

(2) Assume that $Lk(z, A(S, \p)) = \{ v, w \} \star H$, with $H \neq \varnothing$. If  $S \setminus z$ is a connected subsurface, then $Lk(z, A(S,\p)) \cong A(S\setminus z)$, in contradiction with (1). So $z$ is a separating arc and $Lk(z ,A(S,\p))$ is isomorphic to $A \star B$, where $A$ and $B$ are the non-empty arc complexes of the two connected components of $S \setminus z$. By (1) we have $A \cong \{ v, w\}$ and $B \cong H$. By Lemma \ref{remark1bis} one of the connected components of $S \setminus z$ is isomorphic to the 4-polygon $(S_{0,1}^0, (4))$, so $z$ is a 4-petal or a 3-leaf.    
\end{proof}

The following lemma can be adapted from Ivanov \cite{Iv2} (see also Irmak-McCarthy \cite{IMC}).
\begin{lemma}[Flip invariance]\label{int_1}
Let $\phi: A(S, \p) \to A(S', \p')$ be an isomorphism. 
Let $a, b \in A(S, \p)$ be vertices such that $i(a, b)=1$, then $i(\phi(a),\phi(b))=1$.
\end{lemma}

\begin{proof}
%Let us first consider the case when $\mathscr A= A$.  
Since $\phi$ is an isomorphism, $\mathrm{ dim }A(S,\p) = \mathrm{ dim }A(S',\p')$ and $\phi$ sends maximal simplices into maximal simplices, that is, triangulations of $(S, \p)$) into triangulations of $(S', \p')$. Let $a$ and $b$ be arcs intersecting exactly once, we can extend $a$ to a triangulation $\tau_a$ such that the set of arcs $\tau_b:=(\tau_a \setminus \{a \}) \cup b$ is also a triangulation of $S$. Let  $\sigma$ be the simplex of $A(S, \p)$ defined as $\sigma = \tau_a \cap \tau_b = \tau_a \setminus a = \tau_b \setminus b$, it has codimension $1$. Now $\phi(\tau_a)$ and $\phi(\tau_b)$ are triangulations of $(S', \p')$, and $\phi(\sigma) = \phi (\tau_a)\cap \phi(\tau_b)= \phi(\tau_a) \setminus \phi(a) = \phi(\tau_b) \setminus \phi(b)$ has codimension $1$. By Proposition \ref{connected}  $\phi(\tau_a)$ and $\phi(\tau_b)$ differ by one elementary move, so $i(\phi(a), \phi(b))=1$.

\end{proof}

\begin{lemma}[3-petals invariance]\label{4}
Let $\phi: A(S, \p) \to A(S', \p')$ be an isomorphism. 
The following holds:
\begin{enumerate}
\item If $v$ is a $4$-petal, then $\phi(v)$ is a $4$-petal. 
\item If $v$ is a $3$-leaf, then $\phi(v)$ is a $3$-leaf.
\item If $v$ is a $3$-petal, then $\phi(v)$ is a $3$-petal. 
\item if $v$ is a $3$-petal based on a boundary component with $p\geq 3$ marked points, then $\phi(v)$ is a $3$-petal based on a boundary component with $p'=p$ marked points.
\end{enumerate}
%Moreover if $l_i$ is based on a boundary component with $p\geq 3$ marked points, then $\phi(l_i)$ is based on a boundary component with $p'=p$ marked points, and  
%$\sum_{p_i \geq 3}p_i = \sum_{p_j' \geq 3}p_j'$. 
\end{lemma}

\begin{proof}
1.
By Lemma \ref{remark2-NEW} the image under $\phi$ of a 3-leaf can be either a 3-leaf or a 4-petal. If $p_i \leq 3$ for all $i$'s then $(S, \p)$ does not contain any 4-petal, and we are done. Similarly, if $p_i \neq 3$ for all $i$'s then $(S,\p)$ does not contain any 3-leaf, and again we're done. 
Therefore, up to reorder the $p_i$'s, we will assume that $p_1 \geq 4$ and $p_2 =3$. We will now prove that $\phi$ cannot exchange 4-petals and 3-leaves.

Let $v$ be a 4-petal based on the first boundary component $\mathscr B_1$. Denote by $\rho$ a $\frac{2\pi}{p_1}$-rotation around $\mathscr B_1$, and remark that $\rho^k(v)$ is a 4-petal for all $k = 0, \ldots, p_1 -1$. The intersection pattern of all the 4-petals around $\mathscr B_1$ is the following: 
$$ i(\rho^k(v), \rho^h(v))= \begin{cases} 
1 & \mbox{, if } |k - h| = 1, 2 \mbox{ mod } p_1  \\ 
0 & \mbox{, otherwise } 
\end{cases} $$

By Lemma \ref{int_1} and simpliciality, the $\phi(\rho^k(v))$'s are $p_1$ distinct arcs with the same intersection pattern, and they are all $4$-petal or $3$-leaves. We deduce that they are necessarily parallel to the same boundary component $\mathscr B_i'$ that contains $\phi(v)$. If $\phi(v)$ is a $3$-leaf then $p_i'=3$ and $\mathscr B_i'$ supports at most three 3-leaves and no 4-petals, since $p_1 \geq 4$ we get to a contradiction. Thus $\phi(v)$ is a 4-petal, and so are all the $\phi(\rho^k(v))$'s. This argument also proves that $p_i' = p_1$.  

2. It follows immediately by (1) and  Lemma \ref{remark2-NEW}. 

3. 
Remark that if $v$ is a $3$-petal based on $\mathscr B_i$, there exists a $4$-petal (or a $3$-leaf, in the case $p_i=3$) $w$ based on $\mathscr B_i$ such that $\mathrm{Lk }(w, A(S,\p))= \{ v, \rho(v)    \} \star A$, where $A$ is the arc complex of the other connected component of $S \setminus w$. By the previous cases, $\phi(w)$ is also a $4$-petal (or a $3$-leaf when $p_i =3$), and $Lk(\phi(w), A(S', \p')) = \{ v', \rho(v') \} \star A'$, where $v'$ is a 3-petal in the complement of $S \setminus \phi(w)$ and $A'$ is the arc complex of the other connected component. By Lemma \ref{remark2-NEW} $\{ \phi(v), \phi(\rho(v)) \} = \{ v', \rho(v') \}$, and  $\phi(v)$ is a 3-petal.

4. The number $p_i$ of points on the $i$-th boundary component of $S$ is equal to the number of $3$-petals based on it.
\end{proof}
The following corollary can be proven by induction on $p$. 
\begin{corollary}[$p$-petals invariance]\label{inv:petals2}
Let $\phi: A(S, \p) \to  A(S', \p')$ be an isomorphism. If $v$ is a $p$-petal for $p\geq 3$, then $\phi(v)$ is also a $p$-petal. Similarly, if $v$ is a $p$-leaf for $p \geq 3$, then $\phi(v)$ is also a $p$-leaf.
\end{corollary}

\begin{corollary}\label{41}
If $A(S,\p)$ is isomorphic to $A(S',\p')$ then $(S, \p)$ and  $(S', \p')$ have the same number of boundary components  with $k$ marked points on it for every $k \geq 3$.\end{corollary}

\begin{proof}
Let $\phi$ be an isomorphism between $A(S,\p)$ and $A(S',\p')$. Denote by $\mathscr B_1, \ldots , \mathscr B_m$ the boundary components with exactly $k $ marked points on it. For each $i = 1, \ldots , m$,  let $\mathscr P_i$ be the set of all the 3-petals around $\mathscr B_i$. Note that if $i \neq j$ then $\mathscr P_i \cap \mathscr P_j = \varnothing$ and $\phi(\mathscr P_i) \cap \phi(\mathscr P_j) = \varnothing$. By Lemma \ref{4} each element in $\phi(\mathscr P_i)$ is a 3-petal parallel to a boundary component with $p' = k$ marked points on it. Thus $(S', \p')$ has at least $m$ boundary components with exactly $k$ marked points. The converse inequality uses the same argument for $\phi^{-1}$.    
\end{proof}

\begin{corollary}\label{inv:petals1}
If $\phi: A(S,\p) \to A(S',\p')$ is an isomorphism, the following holds: 
\begin{enumerate}
\item If $a$ is a loop based on a boundary component $\mathscr B$ with $p \geq 3$ marked points  then $\phi(a)$ is a loop based on a boundary component $\mathscr B'$ with $p' = p$ marked points. 
\item If $a$ is an arc with exactly one endpoint on $\mathscr B$ based on with $p\geq 3$ marked points then $\phi(a)$ is an arc with exactly one endpoint on $\mathscr B'$ with $p' = p$ marked points.
\item If $a$ is an arc with exactly two endpoints on $\mathscr B$ based on with $p \geq 3$ marked points then $\phi(a)$ is an arc with exactly two endpoints on $\mathscr B'$ with $p' = p$ marked points.
\end{enumerate}
\end{corollary}

\begin{proof}
Use $3$-petals to mark the endpoints of $a$ and use Lemma \ref{4} - (3).  
\end{proof}

\subsubsection{Simplicial invariance of loop-islands}\label{sec:edge-bridges}
Assume that $(S_{g,b}^s, \p)$ is a surface such that $b+s \geq 2$ and at least one boundary component has exactly one marked point on it. 
\begin{definition}
A loop $l$ in $(S,\p)$ is a  \emph{loop-island} if $l$ is a separating arc such that $S \setminus l$ contains an annulus with exactly one marked point on each boundary component.
An edge $\langle l, w \rangle$ of $A(S, \p)$ is an \emph{edge-bridge} if $l$ and $w$ are as in Figure \ref{archi2}, that is, $l$ is a  loop-island and $w$ is a non-separating arc in the annular component of $S \setminus l$. We will call $w$ \emph{arc-bridge}.
\begin{figure}[htbp]
\begin{center}
\psfrag{v}{\tiny $w$}
\psfrag{l}{ \tiny $l$}
\includegraphics[width=4cm]{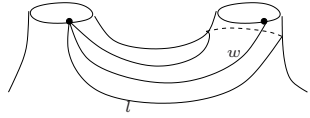}
\caption{An edge-bridge}\label{archi2}
\end{center}
\end{figure}
\end{definition}

We will prove that the arc complex of  the annulus with exactly one marked point on each boundary component (i.e. $A(S_{0,1}^0,(1,1)) = \mathbb R$) behaves like a prime factor in the join product of arc complexes. This will provide a simplicial characterization of edge-bridges.

\begin{lemma}\label{remark2-bis-NEW}
The following holds:
\begin{enumerate}
\item Set $K = \mathbb R$, \emph{i.e.} the simplicial complex isomorphic to $\mathbb R$ and set $H = A(S, \p)$. 
%Let $K_1 = A(S_{0,2}^{0}, (1,1)) = \mathbb R$ and $K_2$ be an arc complex with infinite vertices. \\  
If $K'$ and $H'$ are arc complexes such that $K \star H$ is isomorphic $K' \star H'$, then $K$ is isomorphic $K'$ and $H$ is isomorphic to $H'$ (up to reordering). In particular, $K'$ and  $H'$ are both non-empty.  
\item  A vertex $l$ is a loop-island if and only if $Lk(l, A(S,\p)) \cong \mathbb R \star H$, where $H\neq \varnothing$ is the arc complex of the other connected component of $S \setminus l$. 
\end{enumerate}
\end{lemma}

\begin{proof}
(1) If $H = \varnothing$, then $K' \star H' \cong \mathbb R$, so $\dim K', \dim H' \leq 1$. By Lemma \ref{remark1} $K' \cong \mathbb R$ and $H' \cong \varnothing$ up to renaming and we're done. We now assume $H = \varnothing$. Up to renaming, we can also assume that  infinitely many vertices of $K'$ are isomorphic images of vertices in $\mathbb R$. By Lemma \ref{remark1} $K'$ has dimension at least $1$. Let $z \in \mathbb R$ be a vertex such that its isomorphic image $z'$ is in $K'$.  
We can assume that $z'$ does not correspond to a $4$-petal or a $3$-leaf on the underlying surface of $K'$ (there are only finitely many petals or leaves in $K'$). 
We have:
\begin{align}\label{eq:remark3}  
\{ v, w\} \star H =Lk(z , \mathbb R \star H) \cong Lk(z', K' \star H') = Lk(z', K') \star H' .
\end{align} 
The vertex $z'$ corresponds to a separating or non-separating arc on the underlying surface of $K'$, and  we have two cases on $Lk(z', K')$: 
\begin{description}
\item{(a)} $Lk(z', K') = A$, where $A$ is a non-empty arc complex ($z'$ is nonseparating);
\item{(b)} $Lk(z',K') = A \star B $, where $A$ and $B$ are non-empty arc complexes ($z'$ is separating).
\end{description}
Let us first deal with the case (a). By Lemma \ref{remark2-NEW} we have two more cases: 
\begin{description}
\item{(a.1)} $Lk(z', K') \cong  \{ v, w \}$ (and $H' \cong H$);  
\item{(a.2)} $H' \cong \{v, w\}$ (and $Lk(z', K') \cong H$).   
\end{description}
In case (a.1) $K'$ has dimension 1. By $\mathbb R$ is the unique 1-dimensional arc complex with infinitely many vertices, so $K' \cong \mathbb R$, and 
we have proved the assertion. In case (a.2) we have $H' = \{v, w\} $ and $\mathbb R \star H \cong K' \star \{ v,w \}$.  By Lemma \ref{remark2-NEW} we have $H = \{ v,w \}$. It follows $K' = \mathbb R $ and $H' \cong H$, and we have proved the assertion.  \\
Let us now deal with case (b). Here we have $ \{v, w \} \star H = A \star B \star H' .$ 
By the same argument in the proof of Lemma \ref{remark2-NEW} exactly one of the following holds: 
\begin{description}
\item{(b.1)}  $H' = \{v, w\}$;  
\item{(b.2)}  $A= \{v, w\}$; 
\item{(b.3)}  $B = \{v,w\}$.
\end{description}
In the case (b.1)  we have $\mathbb R \star H \cong K' \star \{v, w\} $, hence $K' \cong \mathbb R$ by Lemma \ref{remark2-NEW}. In the cases (b.2) and (b.3) we conclude $z'$ is a 3-leaf or a 4-petal on $K'$ by Lemma \ref{remark2-NEW}. This contradicts our assumption on $z'$.  \\ 

(2)  We will now prove that if $l$ is a vertex such that $Lk(l, A(S,\p)) \cong \mathbb R \star H$, then $l$ is a loop-island. The converse is immediate. Note that $l$ is not a non-separating arc nor a 2-leaf nor a 3-petal, otherwise its link would be some non-empty arc complex $K'$, and $K' \cong \mathbb R \star H$ is in contradiction with (1). Thus $l$ is a separating arc and $Lk(l, A(S,\p)) \cong K' \star H'$, where $K'$ and $H'$ are the non-empty arc complexes of its connected components. Again by (1), we have $K' \cong \mathbb R$ and $H' \cong H$. By Lemma \ref{remark1} we deduce that the surface whose arc complex is $K'$ is an annulus $(S_{0,1}^0, (1,1))$ and we conclude that $l$ is a loop-island.
\end{proof}

\begin{lemma}[Edge-bridges invariance]\label{edge-bridge}
Let $\phi:A(S,\p) \to A(S,\p)$ be an isomorphism. If  $\langle l, w \rangle$ is an edge-bridge, then $\langle \phi(l), \phi(w) \rangle$ is an edge-bridge. Moreover, $\phi(l)$ is a loop-island,  and $\phi(w)$ is an arc-bridge. 
\end{lemma}

\begin{proof}
If follows from Lemma \ref{remark2-bis-NEW} and the fact that  the link of $\phi(l)$ is isomorphic to the link of $l$. 
\end{proof}

\begin{corollary}\label{edge-bridge2}
If $A(S,\p)$ is isomorphic to $A(S',\p')$ then $(S, \p)$ and $(S', \p')$ have the same number of boundary components with one marked point on it.
\end{corollary}

\begin{proof}
Let $\epsilon_1, \ldots, \epsilon_k$ be pairwise disjoint edge-bridges on $(S,\p)$ with $k=k(S,\p')$ the largest possible.  If $s \neq 0$ or $\p \neq (1, \ldots, 1)$ then $k$ is 
exactly the number of boundary components $b$ of $(S,\p)$ with exactly one marked point.  If $s=0$ and $\p = (1, \ldots ,1)$,  then $k = b-1$. 
We will prove that 
\begin{enumerate}
\item $k(S,\p) = k(S', \p')$ whenever $A(S,\p)$ is isomorphic to $A(S', \p')$;
\item if $s=0$ and $\p=(1, ..., 1)$ then $s' = 0 $ and $\p' =(1, \ldots 1)$. 
\end{enumerate}
By the above remark, (1) and (2) suffice to prove the corollary. 
Let us prove 1. Denote by $\phi$ an isomorphism between $A(S,\p)$ and $A(S',\p')$. By simpliciality and Lemma \ref{edge-bridge} $\phi(\epsilon_1), \ldots ,\phi(\epsilon_k)$ are 
also pairwise disjoint edge-bridges on $S'$ and $k(S',\p') \geq k(S,\p)$.  Using the same argument on $\phi^{-1}$, we have that $k(S,\p) \geq k(S',\p')$, hence $k(S,\p) = k(S',\p')$. \\ 
Let us prove 2. We say that an edge-bridge $\langle l, v \rangle$ is reversible if there exists a loop-island $l' \neq l$ such that $\langle v, l' \rangle$ is also an edge-bridge. 
Equivalently, an edge-bridge is reversible if and only if $v$ is an arc-bridge between two boundary components both having exactly one marked point on it.  It is immediate to see that we have $s=0$ and $\p =(1, 
\dots, 1)$ if and only if every edge-bridge $\epsilon_i$ in a family of pairwise disjoint edge-bridges $\epsilon_1, \ldots, \epsilon_k$ with $k$ maximal is reversible. 
By Lemma \ref{edge-bridge} these condition are invariant under isomorphism, so $\p = (1, \ldots, 1) $ if and only if $\p'= (1, \ldots, 1)$.  
\end{proof}

\subsubsection{Simplicial invariance of nonseparating arcs and Theorem 1.1}\label{sec:2leaves}
Here we will prove that nonseparating arcs are invariant and we will complete the proof of Theorem 1.1. Let us first extend the results of Corollary \ref{41} to 2-leaves. 
\begin{lemma}[2-leaves invariance]\label{2-leaves}
Let $\phi: A(S,\p) \to A(S', \p')$ be an isomorphism. If $l$ is a 2-leaf, then $\phi(l)$ is a 2-leaf. 
\end{lemma}

\begin{proof}
Let us first introduce the following notation: if $k$ is a positive integer,  $\beta^k(\p)$ is the number of boundary components of $(S, \p)$ with exactly $k$ marked points, and   $\beta^{\geq k}(\p)$ the number of boundary components with  at least $k$ marked points. By  Corollary \ref{41} and Corollary \ref{edge-bridge2} we have:
\begin{align}
\mathbb{\beta}^1(\p) &=\mathbb{\beta}^1(\p') \\ 
\mathbb{\beta}^k(\p) &=\mathbb{\beta}^k(\p') \mbox{ for all } k \geq 3.
\end{align}
When $l$ be a 2-leaf, then $S\setminus l$ is a connected surface of type $(S, \mathbb T)$. We have:  
\begin{align}\label{beta}
\beta^{1}(\mathbb T) &= \beta^{1}(\mathbb P) + 1 \\ 
\beta^{2}(\mathbb T) &= \beta^{2}(\mathbb P) - 1   \\ 
\beta^{k}(\mathbb T) &= \beta^{k}(\mathbb P) \mbox{ for all } k \geq 3
\end{align} 
We will prove that $\phi(l)$ is a 2-leaf using the invariants $\beta^i$. By the invariance lemmas proved so far, we have two main cases: 
\begin{description}
\item{(a.)}  $\phi(l)$ is non-separating;
\item{(b.)}  $\phi(l)$ is separating. 
\end{description}
Let us first prove that the case (a) can never occur. In this case $\phi(l)$ is non-separating, so $S' \setminus \phi(l)$ is a connected surface with marked points on the boundary. Denote by $\mathbb{T'}$ its vector of marked points on the boundary. The map $\phi$ restricts to an isomorphism between the arc complexes of $S \setminus l$ and $S \setminus \phi(l)$, so 
$$ \beta^{k}(\mathbb T) = \beta^{k}(\mathbb T')  ~\mathrm{ for }~ k =1~ \mathrm{ and }~ k \geq 3 .$$  
If $\phi(l)$ is non-separating only one of the following occurs:  
\begin{description}
\item{(a.1)} $\phi(l)$ is an arc joining two different points of the same boundary component of $S'$;
\item{(a.2)} $\phi(l)$ is an arc joining two different boundary components of $S'$; 
\item{(a.3)} $\phi(l)$ is a loop based on a boundary component;
\item{(a.4)} $\phi(l)$ is a loop based on a puncture; 
\item{(a.5)} $\phi(l)$ is an arc joining a boundary component and a puncture. 
\end{description}
We will now see that each of the cases above leads to a contradiction. (a.1) When we cut $S'$ along $\phi(l)$ we destroy exactly one boundary component with at least two 
marked points and we create exactly two new boundary components, both with at least 2 marked points, so $\beta^1(\mathbb{T}')=  \beta^1(\mathbb{P}') \neq \beta^1(\mathbb{T}) 
$, and we get a contradiction. (a.2) By Lemma \ref{edge-bridge} $\phi(l)$ cannot be a arc-bridge, so each boundary component that contains the endpoints of $\phi(l)$ has at least 
two marked points. When we cut along $\phi(l)$, we merge these two boundary components into a new one with at least 6 marked points, again we have a contradiction: $
\beta^{1}(\mathbb T' ) = \beta^{1}(\p') \neq  \beta^{1}(\mathbb T)$. (a.3) Assume that $\phi(l)$ is based on a boundary component with $p'$ marked points on it. When we cut along 
$\phi(l)$, we destroy one boundary component and create two new boundary components one with exactly marked points on it and the other with $p' +1$ marked points on it. If 
$p'=1$ then $\beta^1(\mathbb{T}') = \beta^{1}(\p') \neq \beta^1(\mathbb{T})$. If $p' \geq 2$ then $\beta^{\geq 3}(\mathbb{T}') \geq \beta^{\geq 3}(\p') +1 \neq \beta^1(\mathbb{T})$. 
In both cases we have a contradiction. 
(a.4) $S' \setminus \phi(l)$ has two new boundary components each with exactly one marked point on it, and $\beta^1(\mathbb{T}')= 2+ \beta^1(\mathbb{P}') \neq \beta^1(\mathbb{T})$, contradiction.  (a.5) It contradicts Lemma \ref{separating}. \\

Let us now study the case (b). First write $S' \setminus \phi(l) = S_1' \cup S_2'$.  If $A(S_1') = \varnothing$, then $S_1'$ is a 3-polygon and $\phi(l)$ is a 3-petal or a 2-leaf. By 
Lemma \ref{4}, $\phi(l)$ cannot be a 3-petal and we are done. Assume now assume that both $A(S_1') $ and $A(S_2')$ are not empty. 
By Lemmas \ref{4} and \ref{separating} $\phi(l)$ is not a petal, not a drop, therefore $S_1',S_2'$ are both different from a polygon or once-punctured polygon. Let $\pi$ be a 
maximal collection of pairwise disjoint petals and leaves on $(S,\p)$ such that $l \in \pi$. Its complement $S\setminus \pi$ is a connected surface with at most one point on each 
boundary component, and $Lk(\pi, A(S,\p)) \cong A(S, (1, 
\ldots, 1))$. By Lemma \ref{4} the simplex $\phi(\pi)$ contains petals, leaves or separating arcs (the only possible images of the $2$-
leaves of $\pi$). Set $K = Lk(\phi(\pi) \cap A(S_1'), A(S_1')) $ and $H = Lk(\phi(\pi) \cap A(S_2'), A(S_2'))$. We have: 
\begin{equation}\label{eq:diam}
Lk(\pi, A(S,\p)) \cong A(S, (1, \ldots, 1))\cong Lk(\phi(\pi), A(S', \p')) \cong K  \star H .
\end{equation}
By Remark \ref{dimension} $A(S, (1, \ldots, 1))$ has infinite diameter.  If both $K, H \neq \varnothing$, then $K \star H$ has diameter 2, and we have a contradiction.  
If $K = \varnothing$, then $\phi(\pi) \cap A(S_1')$ provides a triangulation of $S_1'$ where 
each arc is separating, so $S_1'$ is a polygon, and again we have a contradiction. The case $H = \varnothing$ is analogue.
\end{proof}

\begin{corollary}\label{2-leaves-2}
If $A(S,\p)$ is isomorphic to $A(S',\p')$ then $(S,\p)$ and $(S', \p')$ have the same number of boundary components with $2$ marked points. 
\end{corollary}

\begin{proof}
The number of boundary components of $(S,\p)$ with $2$ marked points  is equal to the maximum number of pairwise disjoint $2$-leaves that one can put on a surface. By Lemma \ref{2-leaves} it is simplicial invariant.  
\end{proof}

\begin{thma}
If  $A(S_{g,b}^s, \p)$ isomorphic to $A(S_{g',b'}^{s'}, \p')$, then $s=s'$, $b=b'$, $g=g'$ and $p_i=p_i'$ for all $i$ (up to reordering).
%The same result holds for $A_\sharp(S_{g,b}^s, \mathbf{p})$. 
\end{thma}

\begin{proof}
The proof in the case $\mathrm{dim } A(S,\p) \leq 1$ follows from Lemma \ref{remark1}. In the other cases, the proof goes as follows. 
The equality $s=s'$ was proved in Corollary \ref{punctures-s}. The equalities $b=b'$ and $p_i=p_i'$ for all $i$ (up to reordering) follows from Corollary \ref{41} , \ref{edge-bridge2}, \ref{2-leaves-2}. Finally $g=g'$ follows from $\mathrm{dim} A(S_{g,b}^s, \p) = \mathrm{dim } A(S_{g',b'}^{s'}, \p')$. 
\end{proof}

The following invariance lemma for non-separating arcs will be useful later.
\begin{lemma}[Nonseparating arcs invariance]\label{nonseparating}
If $\phi: A(S,\p) \to A(S', \p')$ is an isomorphism and $l$ is a non-separating arc of $(S,\p)$, then $\phi(l)$ is also a non-separating arc. 
\end{lemma}

\begin{proof}
Let us first prove thet we can reduce to the case  $(S, \p) \cong (S, (1, \ldots, 1))$. 
Let $\pi$ be a maximal collection of pairwise disjoint petals and leaves of $(S,\p)$ that are also disjoint from $l$. Its complement $S \setminus \pi$ is a union of triangles and a connected surface of type $(S, (1, \ldots 1))$. Since $\mathrm{Lk}(\pi) \cong A(S, (1, \ldots, 1))$,  $l$ can be regarded as a non-separating arc on $A(S, (1, \ldots, 1))$. By isomorphism $\mathrm{Lk}(\phi(\pi)) \cong \mathrm{Lk}(\pi) \cong A(S, (1, \ldots, 1))$ with $\phi(l) \in \mathrm{Lk}(\phi(\pi)) \cong A(S, (1, \ldots, 1))$, so we can just prove the lemma for $(S, \p) \cong (S, (1, \ldots, 1))$. 

Now work in the case $(S, \p) \cong (S, (1, \ldots, 1))$. We have two main cases: 
\begin{description}
\item{(a)} $l$ has two different endpoints; 
\item{(b)} $l$ is a loop.  
\end{description}
In the case (a), $\phi(l)$ is an arc of the same type by the lemma \ref{separating} and \ref{edge-bridge}. \\
In the case (b), we will assume that $\phi(l)$ is separating and argue by contradiction. Assume that $\phi(l)$ separates $S$ in two connected components. By Lemmas \ref{inv:petals2} and \ref{2-leaves} both components have non-empty arc complexes (say, $H$ and $K$), so $Lk(\phi(l)) \cong H \star K$ has diameter 2.  By isomorphism, $Lk(l)$ has also diameter 2. We will now see that this is impossible. We have two cases: 
\begin{itemize}
\item $l$ is a loop based on a puncture;
\item $l$ is a loop based on a boundary point. 
\end{itemize} 
If $l$ is a loop based on a puncture, then $S \setminus l$ is a connected surfaces with one marked point on each boundary component, so $\mathrm{Lk}(l) \cong A(S \setminus l)$  has infinite diameter by Lemma \ref{infinite diameter}, a contradiction. If $l$ is a loop based on a boundary component, let $l'$ be a non-separating loop parallel to the union of $l$ and the boundary component that contains the endpoints of $l$. The complement $S \setminus \{ l, l' \}$ is the union of a triangle and a surface $S_{\langle l, l' \rangle}$ with one marked point on each boundary component, and $\mathrm{Lk}(\langle l, l' \rangle) \cong A(S_{\langle l, l' \rangle})$ has infinite diameter. As in the previous case, we have: 
$$A(S_{\langle l, l' \rangle}) \cong \mathrm{Lk}(\langle l, l' \rangle) \cong \mathrm{Lk}(\langle \phi(l), \phi(l') \rangle) \cong H' \star K' $$ 
where $H', K'$ are the non-empty arc complexes of the connected components of $S \setminus \{ \phi(l), \phi(l')\}$. We note that $A(S_{\langle l, l' \rangle})$ has infinite diameter and $H' \star K'$ have diameter 2, a contradiction. 
\end{proof}

\section{Proof of Theorem 1.2}\label{section3}
In this section we will prove the following theorem. 
\begin{thmb}
Every automorphism of the arc complex $A(S,\p)$ is induced by a mapping class of $(S,\p)$. Moreover, if the dimension of $A(S,\p)$ is greater than 1, the automorphism group of $A(S,\p)$ is isomorphic to the mapping class group of $(S,\p)$. 
\end{thmb}
Let $\phi: A(S,\p) \to A(S,\p)$ be an automorphism. We will construct a triangulation $\tau$ and a homeomorphism $\Psi:(S,\p) \to (S,\p)$ such that $\Psi$ and $\phi$ agree on every arc of $\tau$. By the following lemma \ref{Ivanov} $\phi$ is induced by $\Psi$ (see also \cite{IMC} and \cite{Iv2}). By this and Proposition \ref{one-to-one} we have Theorem 1.2.
\begin{lemma}\label{Ivanov}
Let $\tau$ be a triangulation of $(S,\p)$. If $\Psi: (S,\p) \to (S,\p)$ is an homeomorphism such that $\phi(t) = \Psi(t)$ for every arc $t \in \tau$, then $\phi$ is the automorphism of $A(S,\p)$ induced by the homeomorphism $\Psi$. 
\end{lemma}

\begin{proof}
We will prove that $\phi(v) = \Psi(v)$ for every vertex $v \in A(S,\p)$. Fix a vertex $v \not \in \tau$ and extend it to a triangulation $\tau'$. If $\tau $ and $\tau'$ differ by one flip, then necessarily that flip involves $v$ and $v$ is disjoint by all the arcs in $\tau$ except one that intersects exactly once. By Lemma \ref{int_1} and simpliciality the same holds for $\phi(v)$ and $\phi(\tau)$, so $\phi(\tau')$ differs by $\phi(\tau)=\Psi(\tau)$ by one flip that involves $\phi(v)$.  By construction $\phi(\tau') \cap \phi(\tau) = \Psi(\tau') \cap \Psi(\tau) = \Psi(\tau') \cap \phi(\tau)$ is a simplex of codimension 1, therefore $\phi(v) = \Psi(v)$. In general, $\tau $ and $\tau'$ differ by a finite number of flips. The same argument proves that $\phi$ and $\Psi$ coincide after each flip, by induction $\phi(\tau') = \Psi(\tau')$ and $\phi(v) = \Psi(v)$.
\end{proof}
\subsubsection{Construction of $\tau$}
We will construct a triangulation $\tau$  with the property that for each triangle $\Delta$ in $S \setminus \tau$, the arcs in the simplex $\phi(\Delta)$ also bound a triangle in $S \setminus \phi(\tau)$. It will be useful to recall that there are three types of triangles in $(S,\p)$ and these are coded by three types of simplices of $A(S,\p)$:
\begin{itemize}
\item a single vertex $\langle x \rangle$ (triangles bounded by a $3$-petal or a $2$-leaf);  
\item an edge $\langle x, y \rangle$ (triangles with one edge on $\partial S$ or self-folded like in edge-drops);
\item a 2-simplex $\langle x, y, z \rangle$ (triangles with 3 essential edges). 
\end{itemize}
We will need the following lemma: 
\begin{lemma}\label{prelemma}
Let $\phi:A(S,\p) \to A(S,\p)$ be an isomorphism.  If $v,w$ are arcs with a common endpoint on the boundary of $S$, so are $\phi(v)$, $\phi(w)$. 
\end{lemma}

\begin{proof}
If this boundary component have at least two marked points, then there exists a $3$-petal or a $2$-leaf $z$ intersecting them both, i.e. $i(z,v) \neq 
0$ and $i(z, w) \neq 0$. By simpliciality and invariance lemmas \ref{41} the same holds for $\phi(z)$, $\phi(v)$, $\phi(w)$, so $\phi(v)$ and $\phi(w)$ have a common endpoint on the boundary. If the boundary component has only one marked point, the same argument holds when $z$ is an appropriate loop-island. \end{proof}
We now construct $\tau$ in four steps. 
\paragraph{Step 1} Enclose all the marked points on the boundary of $S$. \\   
Skip this step if every boundary component of $(S,\p)$ has exactly one marked point on it. Otherwise, triangulate a neighborhood of each boundary component $\mathscr B_i$ with $p_i \geq 2$ marked points  in order to enclose them all  as in Figure \ref{Step 1}. More precisely, for each $\mathscr B_i$ construct a collection $\pi_i$ of pairwise disjoint arcs as follows: 
\begin{itemize}
\item choose a point $P_i$ among its marked points, pick the $p_i$-leaf $v_{p_i}^i$ based in $P_i$;
\item choose a point $Q_i$ adjacent to $P_i$ and pick the 3-petal  $v_3^i$ that has an endpoint on $P_i$  and encloses $Q_i$;
\item triangulate the $(p_i+1)$-polygon bounded by  $v_{p_i}^i$ with petals with one endpoint on $P_i$ as follows: pick the $j$-petal $v_j^i$ which has an endpoint in $P_i$ and encloses $v_3^i$ for every $j=3 , \ldots , p_i-1$.
\end{itemize}

\begin{figure}
\begin{center}
\psfrag{l}{\small $v_{p_i}^i$}
\psfrag{v}{\small $v_{3}^i$}
\psfrag{P}{\small  $P_i$}
\psfrag{Q}{\small  $Q_i$}
\includegraphics[width=5cm]{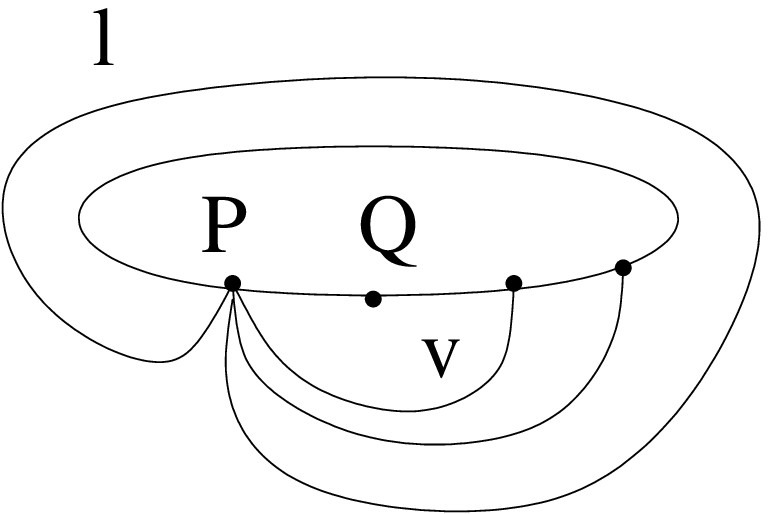}
\caption{Step 1 : }\label{Step 1}
\end{center}
\end{figure}

The arcs in $\pi_i$ span a simplex in $A(S,\p)$ that contains exactly one $p_i$-leaf and one $j$-petal for each $j = 3, \ldots, p_i-1$. Its complement $S \setminus \pi_i$ contains exactly $p_i-1$ triangles: one for the vertex $\langle v_3^i \rangle$, the others for the pairs $\langle v_j^i, v_{j+1}^i \rangle$ for $j = 3, \ldots , p_i-1$.   By  the invariance lemmas \ref{inv:petals2} or \ref{2-leaves}, we have: 
\begin{lemma}\label{configuration 1}
The configuration of arcs in $\pi_i$ is $\phi$-invariant: 
\begin{itemize}
 \item $\phi$ maps arcs in $\pi_i$ to arcs in $\phi(\pi_i)$ of the same topological type; 
 \item if $\Delta$ is a triangle in $S \setminus \pi_i$ then $\phi(\Delta)$ is also a triangle.
 \end{itemize} 
\end{lemma}

Denote by $\pi$ the union of all the $\pi_i$'s: $\pi$ also spans a simplex in $A(S,\p)$. Its complement $S \setminus \pi$ is a union of triangles and a surface $S_\pi$ with exactly one marked point on each boundary component. The map $\phi$ induces isomorphisms on each row of the following diagram (here $1_b$ denotes the vector $(1, \ldots, 1)$): 
$$\xymatrix{
Lk(\pi) \ar[d]^{\cong} \ar[r]^\phi  & Lk(\phi(\pi)) \ar[d]^\cong \\
A(S_\pi) \ar[r] \ar[d]^\cong & A(S_{\phi(\pi)})  \ar[d]^\cong \\ 
A(S, 1_b) \ar[r] & A(S, 1_b)
}$$
%We will now work in $S \setminus \pi$. 
\paragraph{Step 2} Enclose all the  boundary components in a neighborhood of $\mathscr B_1$. \\ 
Skip this step if $(S,\p)$ has exactly one boundary component. Otherwise, proceed as follows. Let $P_1, P_2$ be the marked points on $\mathscr B_1$ and $\mathscr B_2$ as in Step 1. Triangulate a neighborhood of $\mathscr B_1$ to enclose $\mathscr B_2$ as in Figure \ref{Step 2}. More precisely, construct a collection $\beta_1$ of pairwise disjoint arcs as follows: 
\begin{itemize}
\item choose a loop $l$ based in $P_1$ that surrounds $\mathscr B_2$, and choose two arcs $ v,w $ from $P_1$ to $P_2$ so that $\langle l, w \rangle$ and $\langle l, v \rangle$ are edge-bridges based in $S_\pi$;
\item  choose a loop $z$  based in $P_1$ and parallel to $\mathscr B_1 \cup l$. 
\end{itemize}
 
\begin{figure}
\begin{center}
\psfrag{P}{\tiny $P_1$}
\psfrag{v}{\tiny $v$}
\psfrag{u}{\tiny $u'$}
\psfrag{u'}{\tiny $u$}
\psfrag{w}{\tiny $w$}
\psfrag{l}{\tiny $l$}
\psfrag{z}{\tiny $z$}
\includegraphics[width=8cm]{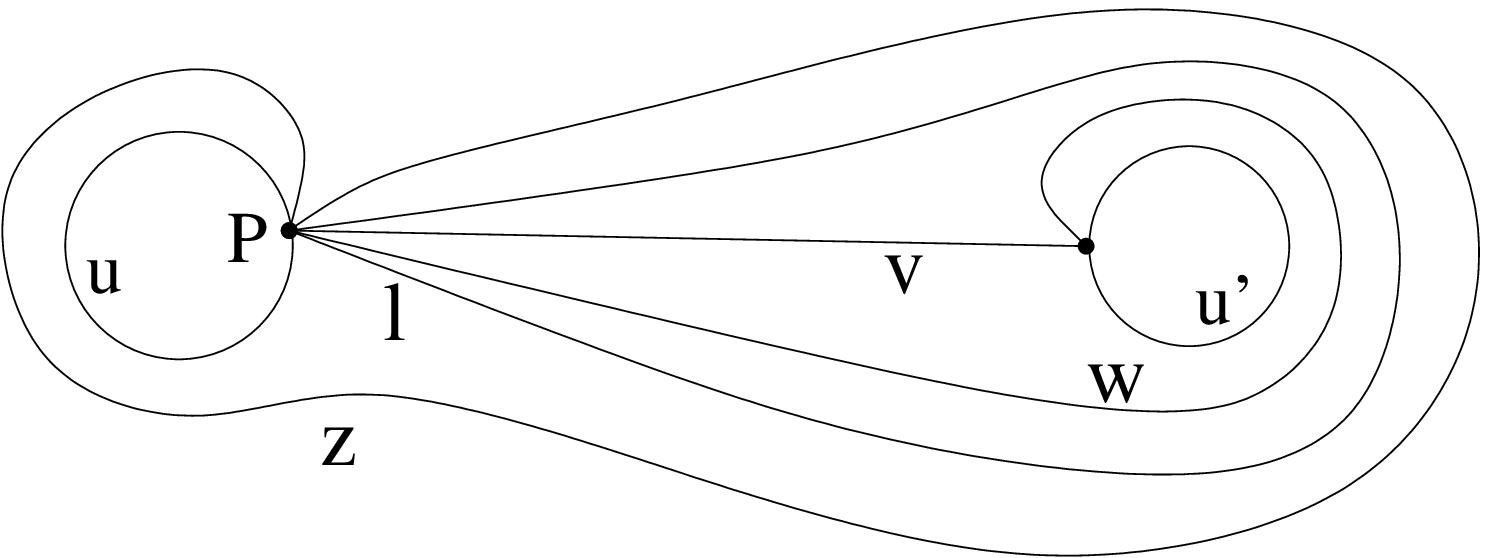}
\caption{Step 2}\label{Step 2}
\end{center}
\end{figure}

The collection $\beta_1$ spans a simplex in $A(S, \p)$. All the arcs in $\beta_1$ have an endpoint in $P_1$ and $S \setminus \{ \pi,  \beta_1 \}$ contains 3 new triangles corresponding to the triplets $\langle v, w, u \rangle$, $\langle l, v , w\rangle$, $\langle z, l , u' \rangle$ (here $u$ can be either $\mathscr B_2$ or the outer loop in $\pi_2$ as in Step 1, and $u'$ can be either $\mathscr B_1$ or the outer loop in $\pi_1$ as in Step 1). 
\begin{lemma}\label{configuration 3}
The configuration of the arcs in $\mathscr \beta_1$ is $\phi$-invariant: 
\begin{itemize}
 \item $\phi$ maps every arc in $\beta_1$ to an arc in $\phi(\beta_1)$ of the same topological type; 
 \item if $\Delta$ is a triangle in $S \setminus \beta_1$ then $\phi(\Delta)$ is also a triangle.
 \end{itemize} 
\end{lemma}

\begin{proof}
 Let us work in $Lk(\pi) \cong A(S, 1_b)$. Under this isomorphism, $l$ is a loop-island, $v$ and $w$ are arc-bridges, $\langle l, v \rangle$ and $\langle l, w \rangle$ are edge-bridges. By the invariance lemma 
\ref{edge-bridge}, the arc $\phi(l)$ is a loop-island, the arcs $\phi(v), \phi(w)$ are arc-bridges, the pairs $\langle \phi(l), \phi(v) \rangle$, $\langle \phi(l) ,\phi(w) \rangle$ are edge-bridges in $A(S, 1_b) \cong Lk(\phi(\pi))$. So $\langle  \phi(v), \phi(w), \phi(l) \rangle$, $\langle  \phi(v), \phi(w) \rangle$ are triangles in $S \setminus \phi(\pi)$ and $\langle  \phi(v), \phi(w), \phi(u) \rangle$ is a triangle in $S$. \\
Let us now prove that $z$ and $\phi(z)$ have the same topological type and $\langle \phi(u'), \phi(l), \phi(z) \rangle$ is also a triangle. The complement $ S \setminus \{ \pi, l, v, w \}$ contains a surface $S_{\langle \pi, l, v, w \rangle}$ that has $b-1$ boundary components, one with two marked points and the others with one marked point. By Theorem 1.1 the same holds for $S \setminus \{ \phi(\pi), \phi(l), \phi(w), \phi(v) \}$. We have isomorphisms : 
$$\xymatrix{
Lk(\langle \pi, l, v, w \rangle) \ar[d]^{\cong} \ar[r]  & Lk(\langle \phi(\pi), \phi(l), \phi(v), \phi(w) \rangle) \ar[d]^\cong \\
A(S_{ \langle \pi, l, v, w \rangle}) \ar[r]_{\cong} \ar[d]^\cong & A(S_{\langle \phi(\pi), \phi(l), \phi(v), \phi(w) \rangle})  \ar[d]^\cong \\ 
A(S, (2,1_{b-2})) \ar[r]_{\cong} & A(S, (2,1_{b-2}))
}$$

The arc $z$ can be seen as a $2$-leaf in $A(S, (2, 1_{b-2}))$. By Lemma \ref{2-leaves} $\phi(z)$ is also a 2-leaf on $A(S, (2, 1_{b-2}))$ and it bounds a triangle with two edges on the boundary, and these edges correspond to $\phi(l)$ and $\phi(u')$. Hence $\langle \phi(l), \phi(z), \phi(u') \rangle$ is also a triangle. 
By Lemma \ref{prelemma} all the arcs in $\phi(\beta_1)$ have a common endpoint $P_1'$. 
\end{proof} 

Now iterate this same construction on $S \setminus \{ \pi, \beta_1 \}$:  at each step construct a collection of arcs $\beta_{i}$ with the same properties of $\beta_1$. The union of $\beta_1, \ldots, \beta_b$ spans a simplex $\beta$ in $A(S,\p)$. Note that $S \setminus \{ \pi, \beta \}$ is a union of triangles and a surface $S_{ \langle \pi, \beta \rangle}$ with one boundary component with one marked point and $s$ punctures.  We have: 
$$\xymatrix{
Lk(\pi \cup \beta) \ar[d]^{\cong} \ar[r]^\phi  & Lk(\phi(\pi \cup \beta)) \ar[d]^\cong \\
A(S_{\langle \pi, \beta \rangle}) \ar[d]^\cong \ar[r]^\cong &A(S_{\langle \phi(\pi), \phi(\beta) \rangle}) \ar[d]^\cong \\ 
A(S_g^s, (1)) \ar[r]^\cong  & A(S_g^s, (1))
}$$
\paragraph{Step 3} Enclose each puncture in a neighborhood of $\mathscr B_1$ as in Figure \ref{Step 3}. \\
Skip this step when $(S,\p)$ has no punctures. Otherwise, triangulate a neighborhood of $\mathscr B_1 $ to enclose each puncture as in Figure \ref{Step 3}. Choose a puncture and construct a collection of arcs $\sigma_1 = \{ l, v, z \}$ as follows: 
\begin{itemize}
\item $\langle l, v \rangle$ is an edge-drop based in $P_1$; 
\item  $z$ is a loop based in $P_1$ and parallel to $\mathscr B_1 \cup l$.
\end{itemize}

\begin{figure}
\begin{center}
\psfrag{P}{\small $P_1$}
\psfrag{v}{\small $v$}
\psfrag{w}{\small $w$}
\psfrag{u}{\small $u$}
\psfrag{l}{\small $l$}
\psfrag{z}{\small $z$}
\includegraphics[width=6cm]{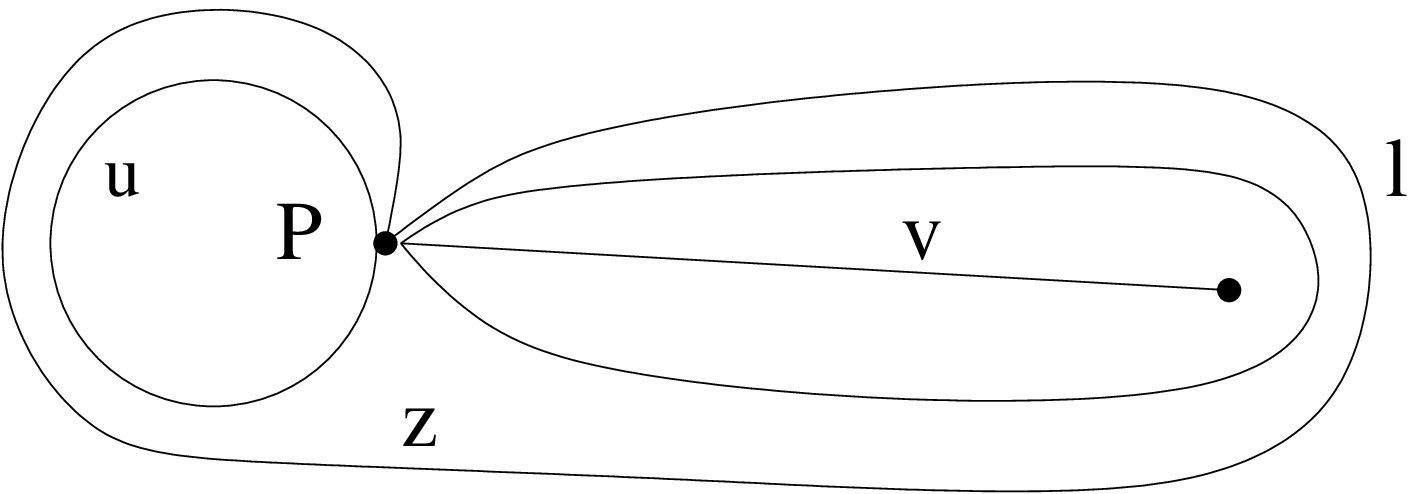}
\caption{Step 3}\label{Step 3}
\end{center}
\end{figure}

The arcs in $\mathscr \sigma_1$ have one endpoint in $P_1$ and span a simplex in $A(S,\p)$. The surface $S \setminus \{ \pi, \beta , \sigma_1 \}$ has two new triangles: one for the triplet $\langle l, z, u \rangle$ (here $u$ can be  $\mathscr B_1$, the outer loop in $\pi$ or the outer loop in $\beta$  ) and one for the pair $\langle v, l \rangle$.

\begin{lemma}
The configuration of arcs in $\sigma_1$ is $\phi$-invariant: 
\begin{itemize}
 \item $\phi$ maps every arc in $\sigma_1$ to an arc in $\phi(\sigma_1)$ of the same topological type; 
 \item if $\Delta$ is a triangle in $S \setminus \sigma_1$ then $\phi(\Delta)$ is a triangle.
 \end{itemize} 
\end{lemma}

\begin{proof}
By Lemma \ref{separating}, $\phi(l)$ and $\phi(v)$ have the same topological type of $l$ and $v$, respectively, and $\langle \phi(l), \phi( v) \rangle$ is also a triangle. 
Let us now prove that $z$ and $\phi(z)$ have the same topological type. The complement $S \setminus \{ \pi,  \beta, l, v\} $ contains a surface $S_{\langle \pi,  \beta, l, v\rangle}$ that has $s-1$ punctures and one boundary component with 2 marked points on it. By Theorem 1.1, the same holds for $S \setminus \{ \phi(\pi), \phi(\beta), \phi(l), \phi(v) \}$. We have the following isomorphisms: 
$$\xymatrix{
Lk(\langle \pi, \beta, l, v \rangle ) \ar[d]^{\cong} \ar[r]  & Lk(\langle \phi(\pi),  \phi(\beta), \phi(l), \phi(v) \rangle) \ar[d]^\cong \\
A(S_{\langle \pi, \beta, l, v\rangle}) \ar[r]_\cong \ar[d]^\cong & A(S_{ \langle \phi(\pi), \phi(\beta), \phi(l), \phi(v)\rangle}) \ar[d]^\cong \\
A(S_g^{s-1}, (2)) \ar[r]_\cong  &  A(S_g^{s-1}, (2))
}$$
The arc $z$ can be seen as a $2$-leaf in $A(S_g^{s-1}, (2))$, by Lemma \ref{2-leaves} $\phi(z)$ is also a $2$-leaf, and $\phi(z)$ bounds a triangle with two edges on the boundary. So  $\langle \phi(z), \phi(l), \phi(u) \rangle$ also bound a triangle. 
By Lemma \ref{prelemma} $\phi(l)$, $\phi(v)$ and $\phi(\pi)$ have a common endpoint $P_1'$.
\end{proof}

Now iterate this construction on $S \setminus \{ \pi, \beta, \sigma_1 \}$ and construct a collection of arcs $\sigma_i$ with the same properties of $\sigma_1$ for each puncture. The union of the $\sigma_1, \ldots, \sigma_s$ spans a simplex $\sigma$ in $A(S,\p)$. The complement $S \setminus \{ \pi, \beta, \sigma \}$ is a union of triangles and a surface $S_{\langle \pi, \beta, \sigma \rangle}$ having one boundary component with one marked point on it and no puncture in its interior. We have: 
$$\xymatrix{
Lk(\pi \cup \beta \cup \sigma ) \ar[d]^{\cong} \ar[r]^\phi  & Lk(\phi(\pi \cup \beta \cup \sigma)) \ar[d]^\cong \\
A(S _{\langle \pi, \beta , \sigma \rangle}) \ar[r]^\cong \ar[d]^\cong & A(S_{\langle \phi(\pi), \phi(\beta), \phi(\sigma)\rangle}) \ar[d]^\cong \\
A(S_g, (1)) \ar[r]^\cong  & A(S_g, (1))
}$$
\paragraph{Step 4} Enclose the non-separating loops. \\ 
Skip this step if $(S,\p)$ has genus 0. Otherwise, work in the complement $ S \setminus \{ \pi, \beta, \sigma \}$. Choose a non-separating loop $a$ based in $P_1$. Triangulate a neighborhood of $\mathscr B_1$  to enclose $a$ as in Figure \ref{Step 4}. More precisely, construct a collection of pairwise disjoint arcs $\gamma_1= \{ a, b, v, w, z \}$ as follows: 
\begin{itemize}
\item $b$ is an arc  parallel to $\mathscr B_1 \cup a$;   
\item $l$ is a loop that surrounds the boundary component relative to $a$ in $S \setminus \{ \pi, \beta, \sigma, a \}$; 
\item $v, w, z$ are arcs as in Step 3 that enclose the boundary component relative to $a$ in $S \setminus \{ \pi, \beta, \sigma, a \}$.
\end{itemize}

\begin{figure}
\begin{center}
\psfrag{P}{\small $P_1$}
\psfrag{a}{\small $a$}
\psfrag{b}{\small $b$}
\psfrag{l}{\small $l$}
\psfrag{u}{\small $u$}
\includegraphics[width=2cm]{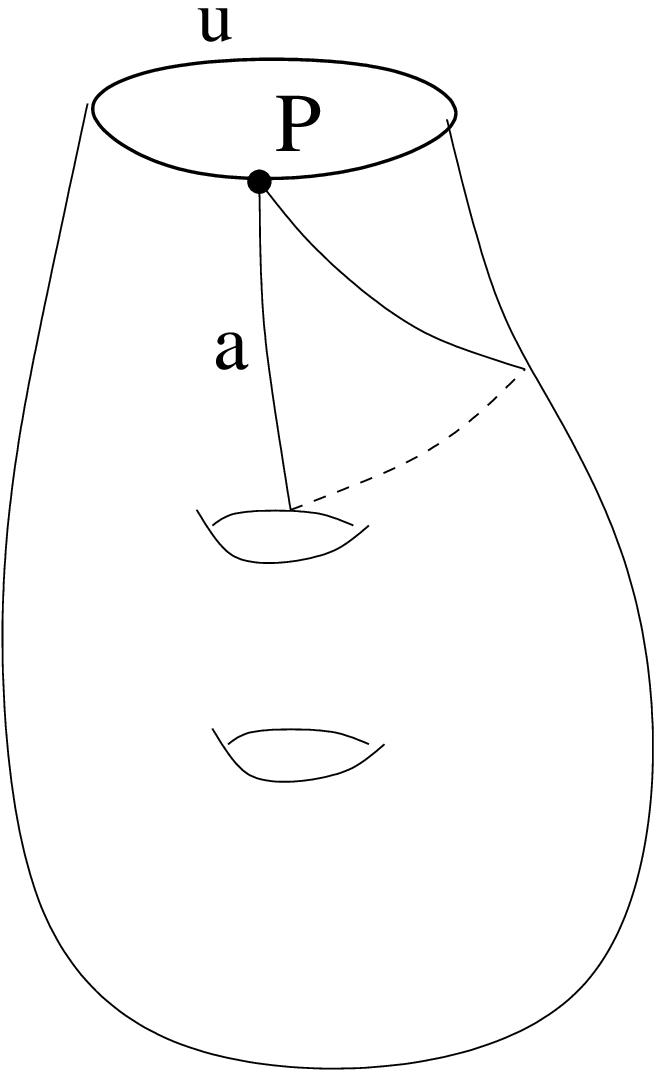}
\includegraphics[width=6cm]{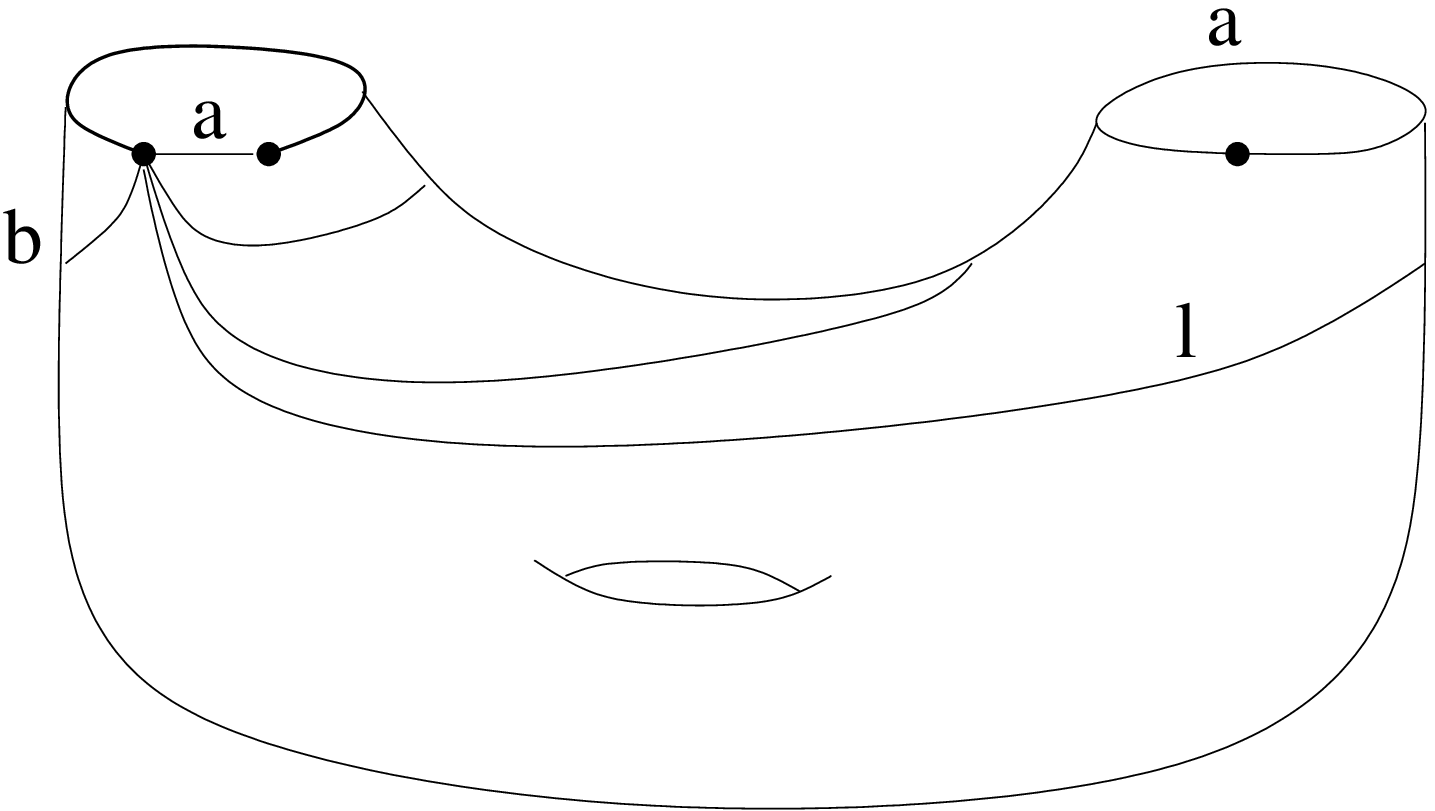}
\caption{Step 4}\label{Step 4}
\end{center}
\end{figure}

The arcs in $\gamma_1$ span a simplex in $A(S,\p)$ and have at least one endpoint on $P_1$. $S \setminus \{ \pi , \beta, \sigma, \gamma_1 \}$ contains four new triangles coming from $\gamma_1$: one for each triplet $\langle l, v, w \rangle$, $\langle b, l, z  \rangle$, and $\langle a, b , u \rangle$ (here $u$ is either $\mathscr B_1$ or the outer loop in one of the above Steps) and the pair $\langle v , w \rangle$.   
We have the following:  
\begin{lemma}\label{configuration 4} 
The configuration of the arcs in $\mathscr \gamma_1$ is $\phi$-invariant: 
\begin{itemize}
 \item $\phi$ maps arcs in $\gamma_1$ to arcs in $\phi(\gamma_1)$ of the same topological type; 
 \item if  $\Delta$ is a triangle in $S \setminus \gamma_1$ then $\phi(\Delta)$ is also a triangle.
 \end{itemize} 
\end{lemma}

\begin{proof}
The arc $a,\phi(a)$ are both non-separating by Lemma \ref{nonseparating}. We have isomorphisms: 
$$\xymatrix{
Lk(\langle \pi, \beta, \sigma, a \rangle ) \ar[d]^{\cong} \ar[r]  & Lk(\langle \phi(\pi),  \phi(\beta), \phi(\sigma), \phi(a) \rangle) \ar[d]^\cong \\
A(S _{\langle \pi, \beta, \sigma, a\rangle}) \ar[r]_\cong \ar[d]^\cong & A(S_{\langle \phi(\pi), \phi(\beta), \phi(\sigma), \phi(a)\rangle}) \ar[d]^\cong \\
A(S_{g-1}, (2,1)) \ar[r]_\cong  &  A(S_{g-1}, (2,1))
}$$
The arc $b$ can be thought of as a $2$-leaf in $A(S_{g-1}, (2,1))$. By Lemma \ref{2-leaves} $\phi(b)$ is also a $2$-leaf in $A(S_{g-1}, (2,1))$, it bounds a triangle with the two segments on its boundary, so $\langle \phi(a), \phi(b), \phi(u) \rangle$ also bound a triangle. The rest of the proof follows from Lemma \ref{configuration 3} up to passing to $Lk(\langle \pi, \beta, \sigma, a, b \rangle) \cong A(S_{g-1}, (1,1))$. 

\end{proof}

Now iterate this construction on $S \setminus \{ \pi , \beta , \sigma, \gamma_1 \}$ : at each step choose a new non-separating arc and you get a collection of arcs $\gamma_i$ with the same properties of $\gamma_1$. The union of $\gamma_1, \ldots, \gamma_g$ obtained this way spans a simplex $\gamma$ in $A(S,\p)$. 
The union $ \pi \cup \beta \cup \sigma \cup \gamma$ spans a simplex $\tau$ in $A(S,\p)$ which is a triangulation of $(S,\p)$. 

\subsubsection{Construction of $\Psi$}
By Lemmas \ref{configuration 1}, \ref{configuration 3}, \ref{configuration 4} if $\Delta$ is a triangle in $S \setminus \tau$, then $\phi(\Delta)$ is a triangle of the same type in $S \setminus \phi(\tau)$. We want construct an homeomorphism $\Psi$ that "covers" $\phi$. \\ For every $\Delta$ in $S \setminus \tau$, we can find homeomorphisms from $\Delta$ to $\phi(\Delta)$ as follows: 
\begin{itemize}
\item for every triangle $\Delta_i = \langle x, y, z \rangle$ there is a unique homeomorphism $ \Psi_i : (\Delta_i, x, y, z) \to (\phi(\Delta_i), \phi(x), \phi(y), \phi(z))$ well-defined up to isotopies and its orientation type is fixed; 
\item for every triangle $\Delta_i = \langle x, y \rangle$ with one edge on the boundary, there is a unique homeomorphism $\Psi_i : (\Delta_i, x, y) \to (\phi(\Delta_i), \phi(x), \phi(y))$ well-defined up to isotopies and its orientation type is fixed; 
\item for every triangle  $\Delta_i = \langle x, y \rangle$  corresponding to a drop, there exist two homeomorphisms $\Psi_i, \bar{\Psi_i}: (\Delta_i, x, y) \to (\phi(\Delta_i), \phi(x), \phi(y))$ well-defined up to isotopies and they have opposite orientation types;
\item for every triangle  $\Delta_i = \langle x \rangle$  corresponding to a $3$-petal or a $2$-leaf, there exist two homeomorphisms $\Psi_i, \bar{\Psi_i}: (\Delta_i, x) \to (\phi(\Delta_i), \phi(x))$ well-defined up to isotopies and they have opposite orientation types.
\end{itemize}
We will now see that the maps $\Psi_i$ associated to any two adjacent triangles are compatible on the common edge. In the lemma below we see that $\phi$ preserves the configuration of the arcs in a quadrilateral formed by any two adjacent triangles.  

 \begin{lemma}\label{lem:B}
Let $\Delta_1, \Delta_2$ be two adjacent embedded triangles with at most one edge on the boundary, and let $f \in \Delta_1 \cap \Delta_2$ be a common edge.  Let $f^\star$ be the edge obtained 
performing a flip on $f$ and $\Delta_1^\star$, $\Delta_2^\star$ be the two new triangles bounded by $f^\star$. Then $\phi(\Delta_1^\star)$ and $\phi(\Delta_2^\star)$ are both triangles and  $\phi(f^\star)$ is their common edge.
%Moreover, if  $Q$ is the quadrilateral $\Delta_1 \cup \Delta_2$, $\phi$ preserves the configuration of the arcs in $Q$.
\end{lemma}
\begin{proof}
Let $Q$ be the quadrilateral $\Delta_1 \cup \Delta_2$, we will see that $\phi$ preserves the configuration of the arcs in $Q$. By Lemmas \ref{configuration 1}, \ref{configuration 3}, \ref{configuration 4} $\phi(Q)$ is a quadrilateral and $\phi(f)$ is a diagonal. By Lemma \ref{int_1} $\phi(f^\star)$ is obtained flipping $\phi(f)$. 
It is  immediate that if one between $\phi(\Delta_1^\star)$ or $\phi(\Delta_2^\star)$ is a triangle, then the other is also a triangle. Here we proceed by cases, and we study the five possible types of quadrilaterals that we can find in $\tau$. \\
The case (1) is the one in Figure \ref{lem:B}, where the arc $a$ can be either a piece of boundary or a petal, $b,d$ are segments of the boundary. The arc $f^\star$ is a $3$-petal with $\Delta_1^\star = \langle f^\star \rangle$ and $\Delta_2^\star = \langle a, f^\star, c \rangle$. By Lemma \ref{inv:petals2}, $\phi(f^\star)$ is also a $3$-petal, so $\phi(\Delta_1^\star)$ is also a triangle. \\ 
\begin{figure}[ht]
\begin{center}
\psfrag{d}{\small $d$}
\psfrag{a}{\small $a$}
\psfrag{b}{\small $b$}
\psfrag{f}{\small $f$}
\psfrag{g}{\small $f^\star$}
\psfrag{c}{\small $c$}
\includegraphics[width=10cm]{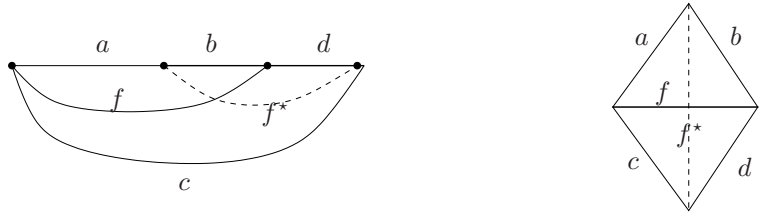}
\caption{Configuration (1)}\label{lem:B}
\end{center}
\end{figure} \\
The case (2) is the one in Figure \ref{lemB:2}, where the arcs $a,b$ can be either segments of the boundary or arcs in $\tau$ and $\Delta_1^\star = \langle a, z, f^\star \rangle$, $\Delta_2^\star = \langle b, l, f^\star \rangle$.  
%VECCHIA DIMOSTRAZIONE - Assume that $b$ is an arc of $\tau$ (the case in which $a$ is an arc of $\tau$ is analogue). Denote by $\sigma$ the subset of the arcs in $\tau$ that triangulate the connected components of $S \setminus \{ a, b, l\}$ not containing $z$. The arc $f^\star$ can be described as a $3$-petal in $S \setminus \{ \sigma, a, b\}$. By Lemma \ref{inv:petals2} so it is $\phi(f^\star)$ in $S\setminus \{ \phi(\sigma), \phi(l), \phi(b) \}$. Since $f^\star$ is not a $3$-petal in $S \setminus \{ \sigma \}$, nor it is $\phi(f^\star)$ in $S \setminus \{ \phi(\sigma) \}$. It follows that the two segments of the boundary of $S 
%\setminus \{ \phi(\sigma), \phi(l), \phi(b) \}$ that bound a triangle with $\phi(f^\star)$ are those coming from $\phi(b) \cup \phi(l)$. We conclude that $\phi(\Delta_1^\star)= \langle \phi(f^\star), \phi(b), \phi(l) \rangle$ is also a triangle on $S$. 
Assume that $a$ is an arc of $\tau$ (the case where $a$ is a boundary component and $b$ is an arc of $\tau$ is analogue). We will prove that $\phi(\Delta_2^\star)$ is a triangle. By simpliciality $\phi(Q)$ is also a quadrilateral and $\phi(f)$ and $\phi(f^\star)$ are its two diagonals. If $\phi(\Delta_2)$ is not a triangle, then $\langle \phi(a), \phi(l), \phi(f^\star) \rangle$ is a triangle, so $\phi(f^\star)$ is a 3-petal in the connected component of $S \setminus \{ \phi(a), \phi(l) \}$ that contains $\phi(z)$ and $\phi(a)$. By the invariance lemma of 3-petals, the same holds for $f^\star$ in $S \setminus \{a, l \}$, and $f^\star$ should be parallel to $a \cup l$, contradiction.
\begin{figure}[htbp]
\begin{center}
\psfrag{z}{\small $z$}
\psfrag{l}{\small $l$}
\psfrag{a}{\small $a$}
\psfrag{b}{\small $b$}
\psfrag{f}{\small $f$}
\psfrag{g}{\small $f^\star$}
\psfrag{c}{\small $c$}
\includegraphics[width=10cm]{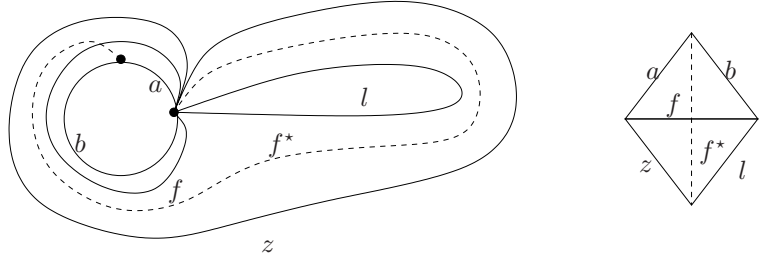}
\caption{Configuration (2)}\label{lemB:2}
\end{center}
\end{figure} \\
The case (3) is the one in Figure \ref{lemB:3}, where the arc $l$ is either a boundary component or an arc of $\tau$, and $\Delta_1^\star= \langle z, f^\star, a \rangle$, $\Delta_2^\star = \langle f^\star, l, a\rangle $. The assertion is immediate. 
\begin{figure}[h!]
\begin{center}
\psfrag{z}{\small $z$}
\psfrag{l}{\small $l$}
\psfrag{a}{\small $a$}
\psfrag{b}{\small $b$}
\psfrag{f}{\small $f$}
\psfrag{g}{\small $f^\star$}
\psfrag{c}{\small $c$}
\includegraphics[width=10cm]{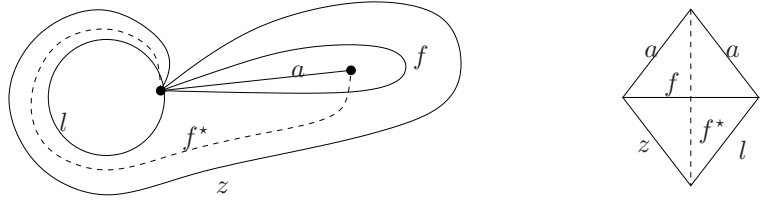}
\caption{Configuration (3)}\label{lemB:3}
\end{center}
\end{figure} \\
The case (4) is the one in Figure \ref{lemB:4}, where the arc $a$ is either a boundary component or an arc of $\tau$. Here $\Delta_1^\star = \langle a, b , f^\star  \rangle$ and $\Delta_2^\star = \langle b, f^\star, z \rangle$. If $\phi(\Delta_1^\star)$ is not a triangle, then $\langle \phi(b) , \phi(a), \phi(f^\star) \rangle$ is a triangle. The arcs $\phi(a)$ and $\phi(z)$ share an endpoint, so $a$ and $z$ by Lemma \ref{prelemma} and we have a contradiction.  
%Denote by $\sigma$ the triangulation of the connected component of $S \setminus a $ that does not contain $z$. The arc $f^\star$ can be described as an edge-bridge in $S \setminus \{ \sigma, a\}$, so it is $\phi(f^\star)$ in $S \{\phi(\sigma), \phi(a) \}$ by Lemma \ref{edge-bridge}. It follows that it bounds a triangle together with $\phi(a)$ and $\phi(b)$ and $\phi(\Delta_1^\star)= \langle \phi(b), \phi(f), \phi(a) \rangle$ is a triangle.    
\begin{figure}[h!]
\begin{center}
\psfrag{z}{\small $z$}
\psfrag{l}{\small $l$}
\psfrag{a}{\small $a$}
\psfrag{b}{\small $b$}
\psfrag{f}{\small $f$}
\psfrag{g}{\small $f^\star$}
\psfrag{c}{\small $c$}
\includegraphics[width=10cm]{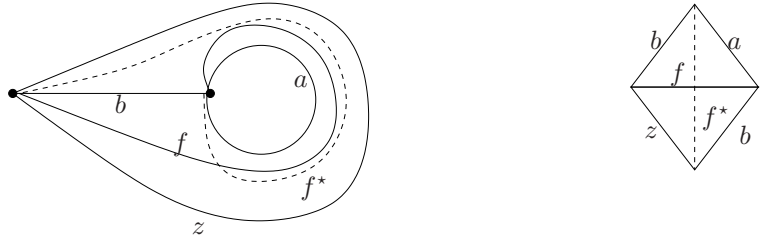}
\caption{Configuration (4)}\label{lemB:4}
\end{center}
\end{figure} \\
The case (5) is  the one in Figure \ref{lemB:5}, where $l$ is a boundary component or an arc of $\tau$. Here $\Delta_1^\star = \langle b , f^\star, z \rangle$, $\Delta_2^
\star = \langle a , f^\star, l \rangle$.  As in (2), if $\phi(\Delta_1)$ is not a triangle then $\langle \phi(a), \phi(z), \phi(f^\star) \rangle$ is a triangle.  $\phi(f^\star)$ is a 3-petal in the connected component of $S \setminus \{ \phi(a), \phi(z) \}$ that contains $\phi(b)$ and $\phi(l)$. By Lemma \ref{4} the same holds for $f^\star$ in $S \setminus \{a, z \}$, so $f^\star$ is parallel to $a \cup z$, contradiction.
%VECCHIA DIMOSTRAZIONE - Denote by $\sigma$ the subset of the arcs in $\tau$ that triangulate the connected components of $S \setminus \{ l, a, b\}$ that does not contain $z$. The arc $f^\star$ can be seen as an edge-bridge in $S \setminus \{ \sigma , l \}$ and so $a$. By Lemma \ref{edge-bridge}, so it is also for $\phi(f^\star)$ and $\phi(a)$. It follows that $\phi(\Delta_2^\star)= \langle \phi(l) , \phi(a), \phi(f^\star) \rangle$ is also a triangle. 
\begin{figure}[h!]
\begin{center}
\psfrag{z}{\small $z$}
\psfrag{l}{\small $l$}
\psfrag{a}{\small $a$}
\psfrag{b}{\small $b$}
\psfrag{f}{\small $f$}
\psfrag{g}{\small $f^\star$}
\psfrag{c}{\small $c$}
\includegraphics[width=9.5cm]{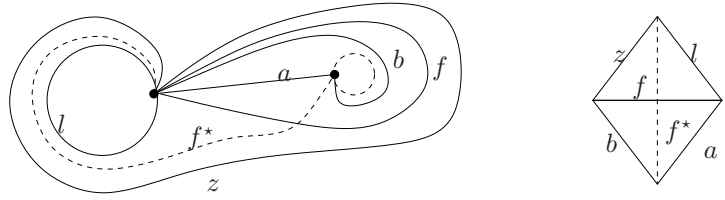}
\caption{Configuration (5)}\label{lemB:5}
\end{center}
\end{figure} 
\end{proof}

By Lemma \ref{lem:B}, we can choose the $\Psi_i$'s so that if two triangles $\Delta_i$ and $\Delta_j$ have a common edge then their associated homeomorphisms $\Psi_i$ and $\Psi_j$ agree on that edge,  and the maps can be glued together. The map $\Psi: (S, \p) \to (S,\p)$ obtained after all these glueings will be a well-defined homeomorphism (up to isotopies) which maps $\tau$ to $\phi(\tau)$. By Lemma \ref{Ivanov} $\Psi$ induces $\phi$, and we are done.

\addcontentsline{toc}{section}{References}
\bibliographystyle{amsplain}
\bibliography{references-new}
\vspace{4cm}
\textit{Address:} \\ 
Department of Mathematics \\
Indiana University, Bloomington IN, USA \\  
\url{vdisarlo@indiana.edu}
\end{document}